\setlist[itemize]{noitemsep} % Make itemize lists more compact
\renewcommand\thesection{\Roman{section}} % Roman numerals for the sections
\renewcommand\thesubsection{\roman{subsection}} % roman numerals for subsections
\titleformat{\section}[block]{\large\scshape\centering}{\thesection.}{1em}{} % Change the look of the section titles
\titleformat{\subsection}[block]{\large}{\thesubsection.}{1em}{} % Change the look of the section titles
\title{Controllability of the semilinear wave equation governed by a multiplicative
control} % Article title
\author{%
\textsc{M. Ouzahra}\\
%\thanks{A thank you or further information} \\[1ex] % Your name
\normalsize MASI Team, University of
Sidi Mohamed Ben Abdellah \\ % Your institution
\normalsize \href{mohamed.ouzahra@usmba.ac.ma}{mohamed.ouzahra@usmba.ac.ma} % Your email address
%\and % Uncomment if 2 authors are required, duplicate these 4 lines if more
%\textsc{Jane Smith}\thanks{Corresponding author} \\[1ex] % Second author's name
%\normalsize University of Utah \\ % Second author's institution
%\normalsize \href{mailto:jane@smith.com}{jane@smith.com} % Second author's email address
}
\date{}
\newtheorem{theorem}{Theorem}
\newtheorem{lem}{Lemma}
\newtheorem{remark}{Remark}
\newtheorem{proof}{Proof}
\begin{document}

% Print the title
\maketitle

%----------------------------------------------------------------------------------------
%	ARTICLE CONTENTS
%----------------------------------------------------------------------------------------

\section{Introduction}
In this paper, we study the controllability problem for a distributed parameter system governed
by the following  $n-$dimensional wave equation:
\begin{equation}\label{wave-init0}
\left\{%
\begin{array}{lll}
  w_{tt}=&\Delta w + v(x,t)  w  + f(t,w,w_t),&
 \hbox{in}\,\;\;  \Omega\\
w=&0,  & \hbox{on} \ \partial\Omega \\
w(x,0)=&w_{_1},\,  w_{t}(x,0)=w_{_2}, & \hbox{in}\, \Omega \\
\end{array}%
\right.
\end{equation}
where $\Omega$ is a bounded open set of $I\!\!\!R^{n},\; n\ge 1$  with a smooth boundary $\partial\Omega$. The real valued coefficient $v(x,t)$ is the multiplicative control and $f$ is the nonlinearity. Our goal is to identify a set of  states $(w(\cdot,t),w_t(\cdot,t))$ that can be  achieved by system (\ref{wave-init0}) at a time $T>0$ using a suitable control $v(x,t)$. Such problems arise in various real situations
(see \cite{kha0} and the rich references therein).  Research in  the multiplicative  controllability of   distributed  systems   have been the subject of several works. The question of  controllability of PDEs equations by multiplicative  controls has attracted many researchers in the context of  various type of  equations,  such as rod equation \cite{ball,kim95}, Beam equation \cite{bea08}, Schr$\ddot{o}$dinger equation \cite{bea06,kim95,ners},   heat equation \cite{can10,can15,fer12,flor14,kha03,kha0,lin06,ouz16}. Various approaches  were used to tackle the question of multiplicative controllability of hyperbolic equations  like (\ref{wave-init0}). The  homogeneous version of (\ref{wave-init0})   (i.e, $f=0$) has been considered  in \cite{ball,K1,kha04,kha0,ouz14}. The case  of semilinear wave equation has been studied in \cite{kha06} for equilibrium-like states of the form $(y^d_1,0)$ using two controls, i.e. beside the control $v(x,t)$,  a time-dependent control has been considered in the damped part. Furthermore, research in  the  controllability of the semilinear wave equation by additive controls  have been the subject of several works (see \cite{leiv,mari,zha,zua2} and the references therein).\\
In this paper, we study the  approximate and exact controllability for the  system (\ref{wave-init0}) by the means of a single multiplicative control, thus we will have a principal reduction in the means to control the system (\ref{wave-init0}).

The paper is organized as follows: in the second section, we first consider  the question of reaching approximately target states of the form $(w(0),\theta_2)$ by applying a suitable time-independent  control $ v(x,t) = v_T(x)$ at a "short" time $T$.  In the second part of the same section, we  define  a set of target states   $(\theta_1,\theta_2)$ that can be approximately achieved  by using a piecewise static  control in "long" time. In Section 3, we apply the result of Section 2 to define a strategy of the controller $v(x,t)$ in order to get  the exact achievement of a class of target states for both damped and undamped cases.

% $(\theta_1,\theta_2)$ in the case $n=1$.

\section{Approximate  controllability}

\subsection{Preliminaries}
The following lemmas   will be used in several steps  in the proof of our main results.

The next result concerns a Gronwall inequality regarding locally integrable functions.
\begin{lem}\label{Gronw} (see \cite{drag,ye07}).
Let  $\varphi (t)$ be a nonnegative and locally integrable function  on $[0,T],\; 0\le T < +\infty, $ such that  the inequality
$$
\varphi(t) \le a +
b \int_0^{t} \varphi(s)ds, \;\mbox{for} \; a.e. \; t\in [0,T]
$$
holds for some  nonnegative constants $a$ and $b$. Then
$$
\varphi(t) \le a e^{b t},\;\mbox{for} \; a.e. \; t\in [0,T].
$$
\end{lem}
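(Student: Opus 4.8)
The plan is to reduce the integral inequality to a differential inequality for the absolutely continuous majorant built from the right-hand side. First I would set $\psi(t)=a+b\int_0^{t}\varphi(s)\,ds$. Since $\varphi$ is nonnegative and locally integrable on $[0,T]$, the function $\psi$ is absolutely continuous there, hence differentiable almost everywhere with $\psi'(t)=b\,\varphi(t)$ for a.e. $t$. The hypothesis reads exactly $\varphi(t)\le\psi(t)$ for a.e. $t$, so substituting gives the differential inequality $\psi'(t)\le b\,\psi(t)$ for a.e. $t\in[0,T]$.

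Next I would integrate this inequality by the classical integrating-factor trick. The product $e^{-bt}\psi(t)$ is absolutely continuous, being the product of an absolutely continuous function and a smooth one, and its derivative, computed a.e., satisfies $\frac{d}{dt}\big(e^{-bt}\psi(t)\big)=e^{-bt}\big(\psi'(t)-b\,\psi(t)\big)\le 0$ for a.e. $t$. The key step is then to invoke the fact that an absolutely continuous function whose derivative is nonpositive almost everywhere is nonincreasing; this follows because such a function equals the integral of its a.e. derivative. Applying this to $e^{-bt}\psi(t)$ yields $e^{-bt}\psi(t)\le\psi(0)=a$ for all $t\in[0,T]$, i.e. $\psi(t)\le a\,e^{bt}$. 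Finally, using once more the pointwise bound $\varphi(t)\le\psi(t)$ valid a.e., I conclude $\varphi(t)\le a\,e^{bt}$ for a.e. $t\in[0,T]$, which is the assertion.

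The main obstacle is purely the measure-theoretic bookkeeping forced by the mere local integrability of $\varphi$: since $\varphi$ need not be continuous one cannot argue pointwise, and must instead work with the absolutely continuous majorant $\psi$ together with the fundamental theorem of calculus for absolutely continuous functions. An alternative that avoids the monotonicity step is the Picard-type iteration: feeding the inequality into itself $k$ times and simplifying the iterated integrals by Fubini produces
$$
\varphi(t)\le a\sum_{j=0}^{k-1}\frac{(bt)^{j}}{j!}+b^{k}\int_0^{t}\frac{(t-s)^{k-1}}{(k-1)!}\,\varphi(s)\,ds,
$$
and the remainder term is bounded by $b^{k}\frac{t^{k-1}}{(k-1)!}\int_0^{t}\varphi(s)\,ds$, which tends to $0$ as $k\to\infty$ by local integrability, leaving precisely the exponential series $a\,e^{bt}$. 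I would keep this scheme in reserve as a robust fallback if one prefers not to invoke the monotonicity of absolutely continuous functions.
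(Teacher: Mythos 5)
Your proof is correct. Note that the paper itself offers no proof of this lemma --- it is stated with a citation to the references \cite{drag,ye07} --- so there is nothing internal to compare against; your argument via the absolutely continuous majorant $\psi(t)=a+b\int_0^t\varphi(s)\,ds$, the integrating factor $e^{-bt}$, and the fact that an absolutely continuous function with a.e.\ nonpositive derivative is nonincreasing is the standard and complete route (the only hypothesis you use implicitly, $b\ge 0$, is part of the statement). Your fallback Picard iteration is also sound and would serve equally well.
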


Let us  give the following lemma which concerns the uniform approximation of continuous functions using Bernstein polynomials.
\begin{lem}\label{lemma3}(\cite{dav}, pp. 108-113).
Let $u : [0,1] \to X$ be  a continuous  function from $[0,1]$ to a Banach space $(X,\|\cdot\|_X)$, and let $B_n(u)$  be  the $n$th Bernstein polynomial for $u$:
$$
B_n(u)(t)=\sum_{k=0}^n \left(
                                                                 \begin{array}{c}
                                                                   n \\
                                                                   k \\
                                                                 \end{array}
                                                               \right)  t^k(1-t)^{n-k}  u(\frac{k}{n}),\;\; n\ge 1.$$
Then the  sequence $B_n(u) $ tends uniformly to $u,$ i.e., $\sup_{t\in [0,1]} \|B_n(u)(t)-u(t)\|_X \to 0,$ as $n\to +\infty.$\\
Furthermore, for all $n\ge 1,$ we have:
% (\cite{dav}, pp. 112-113):
\begin{equation}\label{deriv-Bern}
B_n(u)'(t)=n\sum_{k=0}^{n-1}  (
                                                                 \begin{array}{c}
                                                                   n-1 \\
                                                                   k \\
                                                                 \end{array}
                                                               ) t^k(1-t)^{n-1-k} (u(\frac{k+1}{n})-u(\frac{k}{n})),
\end{equation}
where  $B_n(u)'(t)$ is the derivative of $B_n(u)(t)$ with respect to $t.$

\end{lem}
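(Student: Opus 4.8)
The plan is to treat the two assertions separately. For the uniform-approximation claim I would run Bernstein's classical argument, verifying that it uses only linearity of the finite sum and the triangle inequality, so that it transfers verbatim from the scalar case to a Banach-space-valued $u$. For the derivative identity I would differentiate termwise and reindex.

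Write $p_{n,k}(t)$ for the Bernstein weights $\left(\begin{array}{c} n \\ k \end{array}\right) t^k (1-t)^{n-k}$ appearing in the statement. First I would record the three moment identities $\sum_{k=0}^n p_{n,k}(t)=1$, $\sum_{k=0}^n (k/n)\,p_{n,k}(t)=t$ and $\sum_{k=0}^n (k/n-t)^2 p_{n,k}(t)= t(1-t)/n \le 1/(4n)$, which follow from the binomial theorem and its first two derivatives in $t$. Using the first of these I would write $B_n(u)(t)-u(t)=\sum_{k=0}^n p_{n,k}(t)\big(u(k/n)-u(t)\big)$, take $\|\cdot\|_X$-norms, and split the sum according to whether $|k/n-t|<\delta$ or $|k/n-t|\ge\delta$. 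Since $[0,1]$ is compact, $u$ is uniformly continuous and bounded, say by $M$: the near terms contribute at most $\epsilon$ by uniform continuity and $\sum_k p_{n,k}=1$, while the far terms are controlled by the Chebyshev-type bound $\sum_{|k/n-t|\ge\delta} p_{n,k}(t)\le \delta^{-2}\sum_{k=0}^n (k/n-t)^2 p_{n,k}(t)\le 1/(4n\delta^2)$, so they contribute at most $2M/(4n\delta^2)$. The resulting estimate $\epsilon + M/(2n\delta^2)$ is uniform in $t$, whence letting $n\to\infty$ and then $\epsilon\to 0$ gives the uniform convergence.

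For the derivative formula I would differentiate $B_n(u)(t)=\sum_{k=0}^n p_{n,k}(t)\,u(k/n)$ and use the identity $p_{n,k}'(t)=n\big(p_{n-1,k-1}(t)-p_{n-1,k}(t)\big)$, valid with the convention $p_{n-1,-1}=p_{n-1,n}=0$ and obtained by differentiating the explicit product and invoking the standard factorial relations for the coefficients. Substituting this and shifting the summation index by one in the term carrying $p_{n-1,k-1}$ (an Abel/telescoping step) collapses the expression to $n\sum_{k=0}^{n-1} p_{n-1,k}(t)\big(u((k+1)/n)-u(k/n)\big)$, which is exactly the claimed identity.

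The one substantive ingredient is the variance identity together with the Chebyshev splitting in the first part; the Banach-valued setting itself costs nothing, since no step ever multiplies two values of $u$ and every manipulation respects $\|\cdot\|_X$. In the derivative computation the only delicate point is the endpoint bookkeeping in the reindexing, namely checking that $p_{n-1,-1}$ and $p_{n-1,n}$ vanish so that no spurious boundary terms are produced.
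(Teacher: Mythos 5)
Your argument is correct and is essentially the classical proof found in the cited reference (Davis, pp.~108--113), which the paper invokes without reproducing: the moment identities plus the Chebyshev splitting for uniform convergence, and the identity $p_{n,k}'=n(p_{n-1,k-1}-p_{n-1,k})$ with a reindexing for the derivative formula. Your observation that the vector-valued setting costs nothing --- since every step uses only linearity of finite sums and the triangle inequality for $\|\cdot\|_X$ --- is exactly the point that justifies applying the scalar result in the Banach space $X$, and the endpoint conventions $p_{n-1,-1}=p_{n-1,n}=0$ are handled correctly.
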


Let us  show the following smoothness lemma:
\begin{lem}\label{lemma2}
Let $\Omega$  be an open bounded set of $R^n, \; n\ge 1$. For all $h\in L^\infty(\Omega)$ such that $h\ge 0$, a.e. in $\Omega, $  there exists  $(h_r)\subset C^\infty(R^n)$ such that:

 (i) $(h_r|_\Omega)$ is uniformly bounded with respect to $r$, (where $h_r|_\Omega$ designs the restriction of $h_r$ to $\Omega$),\\
 (ii) for all $r>0; \; h_r>0,\; $ a.e in $\overline{\Omega}, $\\
 and\\
 (iii) $h_r|_\Omega\to h $  in $L^2(\Omega), $ as $r\to 0^+$.
\end{lem}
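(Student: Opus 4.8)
The plan is to produce $(h_r)$ by mollifying $h$ and then shifting it upward by a vanishing constant to force strict positivity. First I would extend $h$ by zero outside $\Omega$: set $\tilde h = h$ on $\Omega$ and $\tilde h = 0$ on $R^n \setminus \Omega$. Since $\Omega$ is bounded, $\tilde h$ is a nonnegative element of $L^\infty(R^n)$ with compact support, hence $\tilde h \in L^2(R^n)$ and $\|\tilde h\|_{L^\infty(R^n)} \le \|h\|_{L^\infty(\Omega)}$. Next I would fix a standard nonnegative mollifier: choose $\rho \in C_c^\infty(R^n)$ with $\rho \ge 0$, $\mathrm{supp}\,\rho \subset B(0,1)$ and $\int_{R^n}\rho = 1$, and put $\rho_r(x) = r^{-n}\rho(x/r)$. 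Then I would set $g_r = \tilde h * \rho_r$. Differentiating under the integral sign shows $g_r \in C^\infty(R^n)$; since $\tilde h \ge 0$ and $\rho_r \ge 0$ one has $g_r \ge 0$, and Young's convolution inequality gives $\|g_r\|_{L^\infty(R^n)} \le \|\tilde h\|_{L^\infty(R^n)} \le \|h\|_{L^\infty(\Omega)}$. Finally I would define
$$ h_r := g_r + r. $$

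With this choice the three properties fall out quickly. Property (ii) is immediate, because $h_r \ge r > 0$ everywhere, and in particular a.e.\ on $\overline\Omega$. Property (i) follows from $\|h_r\|_{L^\infty(\Omega)} \le \|h\|_{L^\infty(\Omega)} + r \le \|h\|_{L^\infty(\Omega)} + 1$ once $r \le 1$. For (iii) I would invoke the classical approximation property of mollifiers: since $\tilde h \in L^2(R^n)$, one has $g_r = \tilde h * \rho_r \to \tilde h$ in $L^2(R^n)$ as $r \to 0^+$; restricting to $\Omega$ gives $g_r|_\Omega \to h$ in $L^2(\Omega)$, and since $\|r\|_{L^2(\Omega)} = r\,|\Omega|^{1/2} \to 0$, I would conclude $h_r|_\Omega \to h$ in $L^2(\Omega)$.

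The only genuine difficulty is to meet the strict-positivity requirement (ii) without spoiling either the uniform bound (i) or the $L^2$-convergence (iii). A pure mollification $g_r$ may vanish on large portions of $\overline\Omega$ wherever $h$ is zero, so (ii) forces some upward shift; yet adding a \emph{fixed} positive constant would leave a nonvanishing error and destroy (iii). The additive vanishing term $r$ resolves this tension precisely: it keeps $h_r$ bounded, stays strictly positive for each fixed $r>0$, and contributes a term whose $L^2(\Omega)$-norm is $O(r)$. I would close by remarking that any positive sequence $\varepsilon_r \to 0^+$ could replace $r$ in the definition of $h_r$, so the construction is robust.
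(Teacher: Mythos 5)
Your proposal is correct and follows essentially the same route as the paper: extend $h$ by zero, mollify with a standard bump-function kernel, and add a vanishing positive constant to secure strict positivity (the paper uses $\frac{r}{r+1}$ where you use $r$, an immaterial difference given your restriction to $r\le 1$ for the uniform bound).
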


\begin{proof}
Let us extend $h$  by $0$  to
%a bounded neighborhood $\hat{\Omega}$ of $\Omega$ and then by $0$ to
$R^n$ so that the obtained extension, still denoted by $h$, lies in $L^2(R^n)\cap L^\infty(R^n)$.\\
Let us introduce the following function:
$$
\phi(x)=  \left\{
  \begin{array}{ll}
   c \: e^{\frac{1}{\|x\|^2-1} } , & \hbox{if} \; \|x\|<1 \\
    0, & \hbox{if} \;\|x\|\ge1
  \end{array}
\right.
$$
where $c$ is a positive constant such that: $\int_{R^n}\phi =1.$ For all $r>0$, let $\phi_r(x)=r^{-n} \phi(\frac{x}{r}), \;$ a.e. $ x\in R^n$ and let $k_r$ be the convolution of $h$ with: $\phi_r; \, k_r=\phi_r \ast h.$ This directly yields  $k_r\in {\mathcal C}^\infty(R^n),\, k_r\ge 0$ a.e. in $\overline{\Omega} $ and $ k_r\to h$  in $L^2(\Omega), $ as $r\to 0^+$ (see \cite{brez}, pp. 69-71). Moreover, for every $r>0$ and for a.e. $x\in \Omega$, we have:
$$
\begin{array}{cccc}
  k_r(x)  & = & c\: r^{-n} \displaystyle\int_{B(x,r)}  h(s) e^{\frac{1}{\|\frac{x-s}{r}\|^2-1}}  ds \\
  \\
   & \le & \displaystyle c \: \|h\|_{L^\infty(R^n)} \: \displaystyle\int_{B(O,1)} ds.
 \end{array}
$$
In other word, the sequence  $(k_r)$ is uniformly bounded with respect to $r$.
We conclude that $h_r := k_r+\frac{r}{r+1}, r >0$ satisfies the claimed properties.
\end{proof}

\subsection{A partial approximate controllability result}
Let us consider the system (\ref{wave-init0}) evolving on a time-interval $(0,T_0)$ with a nonlinear term $f : (0,T_0)\times H_0^1(\Omega) \times L^2(\Omega)   \to L^2 (\Omega) $ which is  globally Lipschitz. \\
Letting $y=(w,w_t)\in H:=H_0^1(\Omega)\times L^2(\Omega), $ we obtain the following equivalent first order system:
\begin{equation}\label{Eq}
\left\{\begin{array}{llll}
  y_{t}&=&  A y + v(t) B y+F(t,y), & t\in (0,T_0)\\
  \\
  y(0)&=&y_0=(w_1,w_2) &
\end{array}
\right.
\end{equation}
where $v(t)=v(\cdot,t)\in \mathcal{U}:=L^\infty(\Omega), \; B=\left(
    \begin{array}{cc}
      0 & 0 \\
      I & 0 \\
    \end{array}
  \right) $
and
$A=\left(
    \begin{array}{cc}
      0 & I \\
      \Delta & 0 \\
    \end{array}
  \right) $ with domain $ \mathcal{D}(A)=\big (H_0^1(\Omega)\cap
H^2(\Omega)\big ) \times H_0^1(\Omega)
$
and where for all $t\in(0,T_0)$ and $y=(w_1,w_2)\in H;\;  F(t,y)=(0,f(t,w_1,w_2)).$ Here, the state space $H$ is endowed with the following inner product: $\langle (u_1,u_2),(v_1,v_2)\rangle= \langle u_1, v_1\rangle_{H_0^1(\Omega)}+\langle u_2, v_2\rangle_{L^2(\Omega)}$ with  corresponding norm $\|\cdot\|.$ With this Hilbert structure, the operator $A$ generates a  semigroup of isometries $S(t)$.\\
For any $\xi\in L^2(\Omega)$ we set: $\Lambda(\xi):=\{x\in\Omega/\; \xi(x)\ne 0\}$ and $ {\bf 1}_{\Lambda(\xi)}$ will denote the characteristic function of $\Lambda(\xi).$

Our first main result  concerns the  approximate controllability toward a target state $w(T) = w_1$, $\partial_t w(T) = \theta_2$ within an arbitrarily small time-interval $(0, T)$, which depends on the choice of the initial state $y_0=(w_1,w_2)$,  the target state $y^d=(w_1,\theta_2)$ and the  precision of steering. The main idea here consists on looking for a static control such that the respective solution to (\ref{Eq}) is such that $y(T)-y^d\to 0, $ as $T\to 0^+.$ This idea was first used  by Khapalov in \cite{kha03} in the context of reaction-diffusion equation (see also   \cite{can15}).

\begin{theorem}\label{thm1c}
 Let $(w_1,w_2)\in H$ and $ \theta_2\in L^2(\Omega)$  and let us set $ a(x):=\frac{\theta_2-w_2}{w_1}{\bf 1}_{\Lambda(w_1)}.$  Assume that: (i) $ a \in L^{\infty}(\Omega) $ and
(ii) for a.e., $x\in \Omega;\; w_1(x)=0 \Rightarrow \theta_2(x)=w_2(x).$ Then for any $\epsilon>0, $ there are a time $ T=T(w_1,w_2,\theta_2,\epsilon) \in (0,T_0)$ and a static control $v(\cdot,t)=v_T(\cdot)\in  W^{2,\infty}(\Omega)$ such that for the respective solution to (\ref{wave-init0}), the following inequalities hold:
$$
\|w(T)-w_1\|_{H_0^1(\Omega)} <\epsilon \;\; \mbox{and}\;\; \|w_t(T)-\theta_2\|_{L^2(\Omega)} <\epsilon.
$$
\end{theorem}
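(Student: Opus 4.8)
The plan is to work with the mild formulation of (\ref{Eq}) and to build, for each small $T$, a \emph{static} control that diverges like $1/T$ but whose integrated effect over the short interval $(0,T)$ is a prescribed, finite change of velocity, while the position barely moves. First I would record that hypothesis (ii) makes $a\,w_1=\theta_2-w_2$ hold a.e. in $\Omega$ (on $\Lambda(w_1)$ by definition of $a$, and off it because both sides vanish), so producing the velocity increment $a\,w_1$ is exactly what reaches the target. Since $a$ may change sign, I regularize it by writing $a=a^{+}-a^{-}$ and applying the smoothing Lemma \ref{lemma2} to each nonnegative part; this gives $a_r\in C^\infty(R^n)$ with $(a_r|_\Omega)$ uniformly bounded in $L^\infty(\Omega)$ and $a_r|_\Omega\to a$ in $L^2(\Omega)$ as $r\to0^+$. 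As $\Omega$ is bounded, $a_r|_\Omega\in W^{2,\infty}(\Omega)$, so I set the candidate control $v_T:=a_r/T$. Then $v_TB$ is bounded on $H$ with $\|v_TB\|_{\mathcal L(H)}\le \|a_r\|_{L^\infty(\Omega)}/T$ (here $By=(0,w)$, so $\|B\|\le1$), and the mild solution $y=(w,w_t)$ of (\ref{Eq}) exists and is unique on $[0,T]$.

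The first genuine step is a \emph{uniform a priori bound}. Using the isometry property of $S(t)$, the bound $\|B\|\le1$, and the linear growth $\|F(s,y)\|\le C_0+L\|y\|$ coming from the global Lipschitz assumption, the integral equation gives $\|y(t)\|\le \|y_0\|+C_0t+\big(\|a_r\|_\infty/T+L\big)\int_0^t\|y(s)\|\,ds$. Applying the Gronwall inequality of Lemma \ref{Gronw} to $\varphi(t)=\|y(t)\|$ yields $\|y(t)\|\le(\|y_0\|+C_0t)\,e^{(\|a_r\|_\infty/T+L)t}$, and since $t/T\le1$ on $[0,T]$ we obtain $\sup_{t\in[0,T]}\|y(t)\|\le M$ with $M$ independent of $T$, and independent of $r$ thanks to Lemma \ref{lemma2}(i). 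This is the crux of the scaling $v_T=a_r/T$: the blow-up of the control is exactly compensated by the shortness of the interval, so the exponential factor stays at the bounded level $e^{\|a_r\|_\infty}$.

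Next I would pass to the limit $T\to0^+$ (with $r$ fixed) in
$$y(T)=S(T)y_0+\frac1T\int_0^T S(T-s)\,(0,a_r w(s))\,ds+\int_0^T S(T-s)\,F(s,y(s))\,ds.$$
Strong continuity gives $S(T)y_0\to y_0=(w_1,w_2)$ in $H$, and the uniform bound $M$ makes the $F$-integral $O(T)\to0$. For the middle term I first prove uniform $L^2$-convergence of the position, $\sup_{s\in[0,T]}\|w(s)-w_1\|_{L^2}\to0$: the first component of $S(\sigma)(0,g)$ equals $\tfrac{\sin(\sqrt{-\Delta}\,\sigma)}{\sqrt{-\Delta}}g$, whose $L^2\to L^2$ norm is $\le\sigma$, so the position contribution of the multiplicative term is $O(T)$. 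Then, writing $\sigma=T-s$ and splitting the integrand as $S(\sigma)(0,a_r w(s))=[S(\sigma)-I](0,a_r w_1)+[S(\sigma)-I](0,a_r(w(s)-w_1))+(0,a_r w(s))$, the first piece averages to $0$ by strong continuity applied to the \emph{fixed} element $(0,a_r w_1)$, the second is killed by the uniform $L^2$ position estimate, and the third averages to $(0,a_r w_1)$. Hence $y(T)\to(w_1,\,w_2+a_r w_1)$ in $H$. Finally, letting $r\to0^+$ and using that $a_r w_1\to a\,w_1=\theta_2-w_2$ in $L^2$ (dominated convergence, as $a_r\to a$ in $L^2$ with $(a_r)$ uniformly bounded and $w_1\in L^2$), a two-parameter argument (choose $r$, then $T=T(r,\epsilon)$) produces $\|y(T)-(w_1,\theta_2)\|_H<\epsilon$, i.e. both stated inequalities.

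The main obstacle I anticipate is the convergence of the averaged integral $\frac1T\int_0^T S(T-s)(0,a_r w(s))\,ds$ to $(0,a_r w_1)$ in the infinite-dimensional setting, where $S(t)$ is merely a strongly continuous group without compactness, so $\sup_{\sigma\le T}\|S(\sigma)-I\|_{\mathcal L(H)}\not\to0$. The resolution is to reduce the troublesome factor to the fixed element $(0,a_r w_1)$ (for which strong continuity alone gives $\sup_{\sigma\le T}\|[S(\sigma)-I](0,a_r w_1)\|\to0$) and to absorb the remainder using the uniform $L^2$ position estimate, which in turn rests on the sharp smoothing bound $\big\|\tfrac{\sin(\sqrt{-\Delta}\,\sigma)}{\sqrt{-\Delta}}\big\|_{\mathcal L(L^2)}\le\sigma$; everything else is bookkeeping built on the uniform Gronwall bound of the second step.
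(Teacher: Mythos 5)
Your proposal is correct, and it takes a genuinely different route from the paper. The paper's proof hinges on swapping the roles of the two generators in the Duhamel formula: since $B$ is nilpotent, $e^{tv_TB}=I+tv_TB$ is bounded and $e^{Tv_TB}y_0=y^d$, so the author writes $y(T)-y^d=\int_0^Te^{(T-s)v_TB}\big(Ay(s)+F(s,y(s))\big)\,ds$ and shows the integral is $O(T)$. Justifying that formula requires $Ay(\cdot)\in L^1(0,T)$, which forces classical solutions and a cascade of reductions (Case 1.1 with $F$ of class $\mathcal{C}^1$, $y_0\in\mathcal{D}(A)$, $a\in W^{2,\infty}$; Bernstein approximation of $t\mapsto F(t,y(t))$ in Case 1.2; density in $y_0$ in Case 2; mollification of $a$ in Case 3), together with $\mathcal{D}(A)$-estimates on the trajectory via the Leibniz rule for $\Delta(aw)$. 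You instead stay entirely at the level of mild solutions and the standard Duhamel formula, and compute the limit of the averaged term $\frac1T\int_0^TS(T-s)(0,a_rw(s))\,ds$ directly, using three ingredients: the uniform-in-$T$ Gronwall bound (the same scaling observation $t/T\le1$ the paper uses), the spectral smoothing bound $\|\sin(\sqrt{-\Delta}\,\sigma)(-\Delta)^{-1/2}\|_{\mathcal{L}(L^2)}\le\sigma$ to get $\sup_{s\le T}\|w(s)-w_1\|_{L^2}\to0$, and the splitting $S(\sigma)(0,g)=[S(\sigma)-I](0,a_rw_1)+[S(\sigma)-I](0,a_r(w(s)-w_1))+(0,a_rw(s))$, which correctly reduces the lack of uniform operator convergence of $S(\sigma)-I$ to strong continuity at a single fixed vector. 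This buys a substantially shorter argument with no regularity assumptions on $y_0$ or on $a$ for the dynamics themselves (your mollification of $a$, done properly via $a=a^+-a^-$ to fit the hypotheses of Lemma~\ref{lemma2}, is needed only to meet the $W^{2,\infty}$ requirement in the statement), and it automatically gives the $H_0^1$-convergence of the position because the only piece of the averaged term with a nonzero first component tends to zero in $H$. What it gives up is the explicit rate $\|y(T)-y^d\|\le C_*T(1+\|y_0\|_{\mathcal{D}(A)})$ of (\ref{appro-est}): your conclusion is purely qualitative in $T$, which suffices for the theorem as stated and for its later applications, but does not reproduce the quantitative dependence the paper records.
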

\begin{proof}
Let $\epsilon>0$, and let us consider the state $y^d=(w_1,\theta_2)$  to be achieved. For any time of steering $0<T<T_0$ we consider the  control
\begin{equation}\label{contr1}
  v(x,t)=v_T(x):=\frac{a(x)}{T},\; t\in (0,T_0).
 \end{equation}
Since $a\in L^{\infty}(\Omega), $ there is a unique mild solution $y(t)$ to (\ref{Eq}) (see \cite{pazy},  p. 184),
 which  is given by the following  variation of constants formula:
\begin{equation}\label{vcf2}
y(t)= S(t)y_0+ \int_0^t S(t-s) \big ( v_T(x)  B y(s) + F(s,y(s)) \big ) ds,
%\, \forall t\in [0,T_0].
\end{equation}
for all $ t$ in $ [0,T_0].$ We aim to show that the control (\ref{contr1}) guarantees the steering of system (\ref{Eq}) to $y^d$ at any  small time  $T>0$, so we can assume in the sequel that $0<T<T_0:=1$.

{\bf Case 1.} $a(\cdot) \in W^{2,\infty}(\Omega)$ and $y_0\in \mathcal{D}(A).$\\
We will distinguish two subcases:

{\bf Case 1.1.} Assume that the operator $F$ is $\mathcal{C}^1$ and globally Lipschitz from $(0,T_0)\times H$ to $H$. Here, the mild solution is a classical one. In particular we have $ y (t)\in \mathcal{D}(A)=H_0^1(\Omega)\cap H^2(\Omega) \times H_0^1(\Omega),\; \forall t\in [0,T_0] $ (see \cite{pazy}, p. 187).  \\
It comes from the assumption (i) and from (\ref{contr1}) that: $
e^{Tv_TB} =\left(
           \begin{array}{cc}
             I & 0 \\
            a & I \\
           \end{array}
         \right),$
so  the assumption (ii) leads to: $e^{Tv_T B}y_0=y^d. $\\
The idea of the proof will consist on proving the following  formula:
\begin{equation}\label{Avcf}
y(T)-y^d=\int_0^Te^{(T-s)v_T(x)B} \big ( A y(s) + F(s,y(s)) \big ) ds,
\end{equation}
and   showing that the term in the right-hand side of the  relation (\ref{Avcf}) tends to zero as $T\to 0^+.$\\
In order for $y(t)$ to satisfy  (\ref{Avcf}), it suffices to show that $Ay(\cdot) \in L^1(0,1)$  (see \cite{bal78}). For this end, let us apply the bounded operator $A_\lambda = \lambda R(\lambda;A) A$ to (\ref{vcf2}), where $R(\lambda;A)$ is the resolvent of $A$. Thus
  $$  A_\lambda y(t)  = S(t)A_\lambda y_0+ \displaystyle \int_0^t A_\lambda S(t-s) (v_T(x) B y(s) + F(s,y(s))) ds.
  $$
 This gives
\begin{equation}\label{6*}
\begin{array}{lll}
A_\lambda y(t) &=&  S(t)A_\lambda y_0 +   \int_0^t A_\lambda S(t-s)  (v_T(x) B  y(s)) ds + \\
&&
 \int_0^t \lambda R(\lambda,A) S'(t-s) F(s,y(s)) ds
\end{array}
\end{equation}
where $S'(t)$ is the derivative of $S(t)$ with respect to $t$.
We have
$$
\begin{array}{lll}
  \int_0^t \lambda R(\lambda,A) S'(t-s) F(s,y(s)) ds   &=& -
\int_0^t \frac{d}{ds} \big ( \lambda R(\lambda;A) S(t-s)  F(s,y(s)) \big ) ds - \\
&&\int_0^t \lambda R(\lambda;A) S(t-s) \big (\frac{\partial F}{\partial s}(s,y(s))+
\frac{\partial F}{\partial y}(s,y(s)) y_s(s) \big ) ds,
\end{array}
$$
where $y_s$ refers to the derivative w.r.t "$s$".
Thus
$$
\begin{array}{lll}
  \int_0^t \lambda R(\lambda,A) S'(t-s) F(s,y(s)) ds   &=&  \lambda R(\lambda;A) \big ( F(t,y(t)) -S(t) F(0,y_0)\big ) - \\
   &&\int_0^t \lambda R(\lambda;A) S(t-s) \big (\frac{\partial F}{\partial s}(s,y(s)) + \frac{\partial F}{\partial y}(s,y(s)) y_s(s) \big ) ds.
\end{array}
$$
Using the fact that $S(t)$ is a contraction semigroup,  we deduce that:
$$
\begin{array}{lll}
\|A_\lambda y(t)\| &\le & \|Ay_0\| +   \int_0^t  \|   A (v_T(x) B y(s)) \| ds + \|F(0,y_0)\| +\| F(t,y(t))\| +\\
\\
&&\int_0^t \| \frac{\partial F}{\partial s}(s,y(s)) + \frac{\partial F}{\partial y}(s,y(s)) y_s(s)\| ds\cdot
\end{array}
$$
Moreover, using  (\ref{vcf2}), we deduce via Gronwall's inequality that:
\begin{equation}\label{y-y0}
\|y(t)\|\le C \|y_0\|+C,\; \forall t\in[0,T]\subset [0,1],
\end{equation}
for some positive constant $C=C(\|a\|_{L^{\infty}(\Omega)})>0$ which is independent of $T$.\\
Then using the fact that $F$ is Lipschitz we get:
\begin{equation}\label{F(y)-y0}
 \| F(t,y(t))\|\le C(1+\|y_0\|),\; C=C(\|a\|_{L^{\infty}(\Omega)})>0,
\end{equation}
and
$$
\int_0^t \| \frac{\partial F}{\partial s}(s,y(s)) + \frac{\partial F}{\partial y}(s,y(s)) y_s(s)\| ds\le LT + L\int_0^t  \| y_s(s) \| ds.
$$
Then,  letting $\lambda\to +\infty,$  we deduce that:
\begin{equation}\label{5*}
\begin{array}{lll}
\|A y(t)\| &\le&  \|Ay_0\| +    \int_0^t  \| A  (v_T(x) B y(s)) \| ds+ \\
&&
C(1+\|y_0\|_{\mathcal{D}(A)}) + L\int_0^t  \| y_s(s) \| ds
\end{array}
\end{equation}
where $L$ is a Lipschitz constant of $F$ and the constant $C=C(\|a\|_{L^{\infty}(\Omega)})>0$  is independent of $T$.\\
In the sequel, the  letter $C$ will be used to denote a generic positive constant which is independent of $T$.\\
Let us now study the terms of the right hand of inequality (\ref{5*}). We have $v_T(x)By(t)=(0,\frac{a(x)}{T}w(t)),$ thus  since $a\in W^{2,\infty}(\Omega) $ it comes that $v_T(x)By(t)\in \mathcal{D}(A)$ for all $t\in [0,T].$ Moreover, we have the following  second order Leibniz rule:
$$\Delta (a w))=\Delta (a) w +2 \nabla (a)\cdot \nabla(w) + a \Delta(w),\; \forall w\in H^2(\Omega), $$
from which we get:
$$
\|A(v_T(x) By(s))\|= \frac{1}{T}\|\Delta (a w(s))\|_{L^2(\Omega)}\le \frac{C}{T} \|y(s)\|_{\mathcal{D}(A)},\; \forall s\in [0,T]$$
where $C=C(\|a\|_{W^{2,\infty}(\Omega)})$ is independent of $T.$ It follows that:
\begin{equation}\label{GP}
\int_0^t \|A  (v_T(x) B y(s) )\| ds \le \frac{C}{T} \int_0^t\|y(s)\|_{\mathcal{D}(A)} ds,\, \forall t\in [0,T]\cdot
\end{equation}
Since $y(t)$ is a classical solution, we have
\begin{equation}\label{y'}
\begin{array}{ccc}
\|y_t(t)\|&=&\|Ay(t)+v_T(x) B y(t)+F(t,y(t))\|\\
&\le &\frac{C}{T} \|y(t)\|_{\mathcal{D}(A)}+T+\|F(0,0)\|
\end{array}\end{equation}
for all $0< t\le T,$ where $C=C(\|a\|_{L^{\infty}(\Omega)})$.\\
Reporting  (\ref{GP}) and (\ref{y'})  in (\ref{5*}) and taking into account (\ref{y-y0}),  we deduce via  Gronwall's inequality that:
\begin{equation}\label{ynDA}
\|y(t)\|_{D(A)}\le C \|y_0\|_{\mathcal{D}(A)}+C,\; \forall t\in [0,T],
\end{equation}
where $C=C(\|a\|_{W^{2,\infty}(\Omega)})$  is independent of $T.$ Thus $Ay(\cdot) \in L^1(0,T)$, and hence the following variation of constants formula holds:
\begin{equation}\label{y-vcf}
y(t)=  e^{tv_T(x)B} y_0+\int_0^te^{(t-s)v_T(x)B} \big (A y(s)+ F(s,y(s)) \big ) ds,\; \forall t\in [0,T],
\end{equation}
from which it comes
\begin{equation}\label{vcf1}
y(T)-y^d=\int_0^Te^{(T-s)v_T(x) B} \big(A y(s)+F(s,y(s))\big)  ds.
\end{equation}
Based on (\ref{vcf1}) and using (\ref{ynDA}) and the fact that $F$ is Lipschitz, we deduce that:
\begin{equation}\label{appro-est}
\|y(T)-y^d\|\le C_*  T  (\|y_0\|_{\mathcal{D}(A)}+1),\; C_*=C_*(\|a\|_{W^{2,\infty}(\Omega)}),
\end{equation}
and hence $\|y(T)-y^d\| <\epsilon$, whenever $0<T<\inf ( {T_0=1, \frac{\epsilon}{C_* (1 +  \|y_0\|_{\mathcal{D}(A)})}})$.

{\bf Case 1.2. } Here, we only assume that the operator $F$ is globally Lipschitz  from $(0,T_0)\times H$ to $H$ (with a Lipschitz constant $L>0$), and let $y(t)$ be the mild solution of (\ref{Eq}) corresponding to control $v_T(x)$ given by (\ref{contr1}). Then  we can    approximate the function $t\to F(t,y(t))$ uniformly  with  $\mathcal{C}^1-$functions $(F_n)$  in $[0,T_0]=[0,1]$.  More precisely, according to Lemma \ref{lemma3}, one can  consider the following Bernstein polynomial:
$$
F_n(t)=\sum_{k=0}^n \left(
                                                                 \begin{array}{c}
                                                                   n \\
                                                                   k \\
                                                                 \end{array}
                                                               \right)  t^k(1-t)^{n-k}  F(\frac{k}{n},y(\frac{k}{n})),\;t\in [0,1]\; n\ge 1.$$
From (\ref{F(y)-y0}) we get: $$\sup_{t\in [0,T]}  \|F_n(t)\|\le C(1+ \|y_0\|),\; C=C(\|a\|_{L^\infty(\Omega)})>0.$$
Moreover,  for all $n\ge 1$ we have:
\begin{equation}\label{deriv-Bern}
F'_n(t)=n\sum_{k=0}^{n-1}  \left(
                                                                 \begin{array}{c}
                                                                   n-1 \\
                                                                   k \\
                                                                 \end{array}
                                                               \right) t^k(1-t)^{n-1-k} \big ( F(\frac{k+1}{n},y(\frac{k+1}{n}))-F(\frac{k}{n},y(\frac{k}{n})) \big ),
\end{equation}
where  $F'_n(t)$ is the derivative of $F_n(t)$.\\
Let us  show that the sequence of derivative $(F'_n)$ is uniformly bounded in $[0,T]$. \\
For all $h, t\in [0,T]$ such that $t+h\in [0,T],$ we have:
$$
\begin{array}{lll}
y(t+h)-y(t)&=&\int_0^h S(t+h-s) \big \{ \frac{a}{T} B y(s) +F(s,y(s)) \big \} ds +S(t+h)y_0-\\
&& S(t)y_0+\int_0^t S(t-s) \big \{ \frac{a}{T} B( y(s+h)-y(s) )  \big \}ds +\\
&&\int_0^t S(t-s) \big \{  F(s+h,y(s+h))-F(s,y(s)) \big \}ds,
\end{array}
$$
from which, we derive:
$$
\|y(t+h)-y(t)\| \le h \|Ay_0\| +h \frac{C(1+\|y_0\|)}{T}  +  \int_0^t \big \{ L h+ (\frac{\|a\|_{\infty}}{T}+L) \|y(s+h)-y(s)\| \big \} ds,
$$
where $C=C(\|a\|_{L^\infty (\Omega)})$ is independent of $T$, which by Gronwall's inequality gives the following estimate:
$$
\|y(t+h)-y(t)\|\le \frac{C(1+ \|y_0\|_{{\mathcal D}(A)})}{T} h,
$$
where $ C=C(\|a\|_{L^\infty(\Omega)})$ is independent of $T$.\\
It follows from the expression of $F'$ and the last inequality that:
$$
\displaystyle\sup_{n\ge 1}\sup_{t\in [0,T]}  \|F'_n(t)\| \le L_T := \frac{C(1+\|y_0\|_{\mathcal{D}(A)})}{T},\; C=C(\|a\|_{L^\infty(\Omega)})\cdot $$
As a consequence, $F_n$ is $L_T-$Lipschitz on $[0,T]$.\\
In the sequel, we will apply the techniques of Case 1.1 to the following approached system:
 \begin{equation}\label{Eqn}
\left\{\begin{array}{ll}
 \frac{d}{dt} y_{n} (t)=  A y_n(t) + v_T(x) B y_n(t)+F_n(t)& \\
 \\
  y_n(0)=y_0=y(0)&
\end{array}
\right.
\end{equation}
Let  $y_n(t)$ denote the classical solution of the  system (\ref{Eqn}).\\
Based on the variation of constants formula, we can show via the Gronwall's inequality that there is  $N=N(T,\epsilon)\in \mathbf{I\!N}$  such that:
$$\|y_{N}(T)-y(T)\|< \epsilon/2.$$
Moreover, applying the relation (\ref{6*}) to $y_n(t)$ leads to:
\begin{equation}\label{6*n}
\begin{array}{lll}
A_\lambda y_n(t) &=&  S(t)A_\lambda y_0 +   \int_0^t A_\lambda S(t-s)  (v_T(x) B  y_n(s)) ds+ \\
&&\int_0^t \lambda R(\lambda,A) S'(t-s) F_n(s) ds\cdot
\end{array}
\end{equation}
We have
$$
\begin{array}{lll}
  \int_0^t \lambda R(\lambda,A) S'(t-s) F_n(s) ds   &=& -
\int_0^t \frac{d}{ds} \big ( \lambda R(\lambda;A) S(t-s)  F_n(s) \big ) ds +
 \int_0^t \lambda R(\lambda;A) S(t-s) F_n'(s)  ds\\
&=&
\lambda R(\lambda;A) \big ( S(t) F_n(0)-F_n(t)\big ) + \int_0^t \lambda R(\lambda;A) S(t-s) F_n'(s).
\end{array}
$$
Then  we deduce that:
$$
\begin{array}{lll}
\|A_\lambda y_n(t)\| &\le&  \|Ay_0\| +   \int_0^t  \|   A (v_T(x) B y_n(s)) \| ds +\\
& &\|F_n(0)\| +\| F_n(t)\| + \int_0^t \| F'_n(s)\| ds
\end{array}
$$
Letting $\lambda\to +\infty$, we get
\begin{equation}\label{5*n}
\|A y_n(t)\| \le  \|Ay_0\| +    \int_0^t  \| A  (v_T(x) B y_n(s)) \| ds + C \|y_0\|+C.
\end{equation}
where $C=C(\|a\|_{L^\infty(\Omega)})$ is a positive constant which is independent of $T.$
Then by proceeding as in  the Case 1.1., we get an estimate like (\ref{appro-est}), namely:
 \begin{equation}\label{nappro-est}
\|y_N(T)-y^d\|\le C  T  (\|y_0\|_{\mathcal{D}(A)}+1),
\end{equation}
where $ C=C(\|a\|_{W^{2,\infty}(\Omega)})>0$  is independent of $N.$\\
It follows  that $\|y_{N}(T)-y^d\| < \epsilon/2, $ for some $T$ small enough, and hence
$$
\|y(T)-y^d\|< \epsilon.$$

{\bf Case 2. } $a(\cdot)\in W^{2,\infty}(\Omega)$ and $y_0\in H.$\\
 Let $T>0,$ and for all $\lambda>0$ we set  $y_{0\lambda}:=\lambda R(\lambda;A)y_{0}\in \mathcal{D}(A).$ Let $y_{\lambda}$ be the mild solution of  (\ref{Eq}) corresponding to the initial state  $y_{0\lambda}=(w_{1\lambda},w_{2\lambda})$   with  the same  control as in the Case 1., i.e.,
$v(x,t)=v_T(x)=\frac{a}{T},\; 0< t < T_0$.\\
We have
$$
\|y(T)-y^d\|\le \|y(T)-y_{\lambda}(T)\| + \|y_{\lambda}(T)-e^{aB}  y_{0\lambda}\|+\|e^{aB}  y_{0\lambda} - y^d\|.
$$
It follows from the variation of constants formula that:
$$
\begin{array}{lll}
y_{\lambda}(t)-y(t)&=&\int_0^t S(t-s) \big ( v_T(x) B (y_{\lambda}(s)-y(s))  \big )ds+S(t)y_{0\lambda}-
\\
&&S(t)y_0+\int_0^t S(t-s) \big (  F(s,y_{\lambda}(s))-F(s,y(s)) \big )ds\cdot
\end{array}
$$
Then, using  the contraction property of the semigroup $S(t)$, it comes:
$$
\begin{array}{lll}
\|{y}_{\lambda}(t)-y(t)\| &\le& \|{y}_{0\lambda}-y_0\|+\frac{\|a\|_{L^\infty(\Omega)}}{T} \int_0^T \|{y}_{\lambda}(s)-y(s)\| ds +\\
&&L\int_0^T \|{y}_{\lambda}(s)-y(s)\| ds, \; \forall t\in [0,T].
\end{array}$$
 Gronwall's lemma yields
$$
\|{y}_{\lambda}(T)-y(T)\|\le C \|{y}_{0\lambda}-y_0\|, \; (C=C(\|a\|_{L^\infty(\Omega)})).
$$
It follows from $y^d= e^{aB}  y_{0}$ that:
$$
\|e^{aB}  y_{0\lambda} - y^d\|\le \|a\|_{L^\infty(\Omega)} \|{y}_{0\lambda}-y_0\|.
$$
We deduce that there is a $\lambda>0$, which is independent of $T\in (0,1), $ such that:
$$
\|{y}_{\lambda}(T)-y(T)\| + \|e^{aB}  y_{0\lambda} - y^d\|<\frac{\epsilon}{2}.
$$
For such a $\lambda,$ we deduce from the same arguments as  in the Case 1 that there exists $0<T<1$ such that:
$
\|y_{\lambda}(T)-e^{aB}  y_{0\lambda} \| <\frac{\epsilon}{2}.
$\\ We conclude that: $$
\|y(T)-y^d\| <\epsilon.$$

{\bf Case 3: } $a(\cdot)\in L^\infty(\Omega)$ and $y_0\in H.$\\
From Lemma \ref{lemma2}, there is a sequence  $(a_k)  \subset W^{2,\infty}(\Omega)$ which is uniformly  bounded  on $\Omega$ such that $a_k \to a$  in $ L^2(\Omega), $ as $k \to +\infty. $ Here, we will  consider the control: $v_T(x)=\displaystyle\frac{a_k}{T }$ for a suitably selected (large enough) $k\in \mathbf{N}$, and let $y(t)$ be the  corresponding solution to (\ref{Eq})  with the  initial state $y(0)=y_0=(w_1,w_2).$\\
Now, let $(w_{2l})\in  L^\infty(\Omega)$ be such that $w_{2l} \to w_2$ in $L^2(\Omega), $ as $l\to +\infty,$ and let us consider the initial state $y_{0l}=(w_1,w_{2l}).$ \\
We have the following triangular inequality:
$$
\begin{array}{ccc}
\|y(T)- e^{aB} y_{0}\| &\le& \|y(T)-e^{a_kB} y_0\|  + \|e^{a_kB} y_0- e^{a_kB} y_{0l}\|  + \\
&&\|e^{a_kB} y_{0l}-e^{aB} y_{0l}\|   + \|e^{aB} y_{0l}-e^{aB} y_{0}\|
\end{array}
$$
From the relation  $e^{a_kB} =\left(
           \begin{array}{cc}
             I & 0 \\
           a_k & I \\
           \end{array}
         \right),$ we deduce that:
         $$
\|e^{a_kB} y_0- e^{a_kB} y_{0l}\|  + \|e^{aB} y_{0l}-e^{aB} y_{0}\|  \le \sup_{k\in I\!\!N} \big ( 1,\|a_k\|_{L^\infty (\Omega)},\|a\|_{L^\infty (\Omega)} \big ) \,  \|y_{0l}-y_0\|$$
and $
e^{a_kB} y_{0l}-e^{aB} y_{0l}=(0,(a_k-a) w_{2l}).$ Let $l\in \mathbf{N}$ be such that
$$
\sup_{k\in \mathbf{N}} \big ( 1,\|a_k\|_{L^\infty (\Omega)},\|a\|_{L^\infty (\Omega)} \big ) \,   \|w_{2l}-w_2\|_{L^2(\Omega)}<\frac{\epsilon}{3},
$$
and for such value of $l,$ we consider a $k$  such that
$$
\|a_k -a\|_{L^2(\Omega)} \| w_{2l}\|_{L^\infty(\Omega)} <  \frac{\epsilon}{3}.
$$
Then, for this value of $k$, it comes from the  Case 2 that there exists $T>0$ such that:
$$
\|y(T)-e^{a_kB} y_0\|  < \frac{\epsilon}{3}.
$$
We conclude that
$$
\|y(T)-e^{aB} y_{0}\| < \epsilon.
$$
Finally, since $e^{aB} y_{0}=y_d, $ it comes
$$
\|w(T)-w_1\|_{H_0^1(\Omega)} <\epsilon \;\; \mbox{and}\;\; \|w_t(T)-\theta_2\|_{L^2(\Omega)} <\epsilon.
$$
\end{proof}
\begin{remark}
For any initial state $(w_1,w_2), $ the set of reachable states $\theta_2$ identified in the above theorem is  convex.
\end{remark}
\subsection{Global approximate controllability}
In this subsection, we will consider the following equation:
\begin{equation}\label{wave-init}
\left\{%
\begin{array}{llll}
  w_{tt}&=&\Delta w + v(x,t)  w -h(x) w_t + f(w),
& \hbox{in}\:\;\;\;
 \Omega \times (0,T)\\
w&=&0,   & \hbox{on} \ \partial\Omega \times (0,T)\\
w(x,0)&=&w_{_1},\,  w_{t}(x,0)=w_{_2}, & \hbox{in}\,\ \Omega
\end{array}%
\right.
\end{equation}
where $T>0,\; h\in L^\infty(\Omega)$ and the nonlinear term $f :  L^2(\Omega)\to L^2 (\Omega) $ is a   globally Lipschitz function. Here,  we will study the approximate controllability problem for the system (\ref{wave-init}) toward a full state $(\theta_1,\theta_2) $ by using two static controls, applied subsequently in time.\\
For any $\zeta\in H^2(\Omega)$ we set $b_\zeta:=
-\displaystyle\frac{ \Delta\zeta + f(\zeta)}{\zeta}{\bf 1}_{\Lambda(\zeta)}, $
%where  ${\bf 1}_{\Lambda(\zeta)}$ is the characteristic  function of the set $\Lambda(\zeta):=\{x\in \Omega/\; \zeta(x)\neq0\}$
and let us consider the following assumptions:\\
$(\mathcal{P}_1)$: $b_\zeta\in L^\infty(\Omega),$\\
$(\mathcal{P}_2)$:  $ h\ge 0, \; a.e.\; \Omega$.
 and there exist $ \delta , \mathcal{T} >0 $ such that:
  \begin{equation}\label{coe}
\int_0^{\mathcal{T}} \int_\Omega h(x) |\varphi_t(x)|^2 dx  dt \ge \delta
\|(\varphi_1,\varphi_2)\|^2_{H},\; \; \forall
(\varphi_1,\varphi_2)\in H,
\end{equation}
where $\varphi$ is the solution of
\begin{equation}\label{dua}
\varphi_{tt}=\Delta \varphi+b_\zeta(x)\varphi+f(\varphi),\;\; \varphi(0)=\varphi_1\in
H_0^1(\Omega),\;\; \varphi_t(0)=\varphi_2\in L^2(\Omega),
\end{equation}
$(\mathcal{P}_3)$:  $\langle f(y)+b_\zeta(x) y,y\rangle_{L^2(\Omega)} \le 0,\; \forall y\in L^2(\Omega), $\\
$(\mathcal{P}_4)$: for a.e. $x\in \Omega,$ we have: $\zeta(x)=0 \Rightarrow f(\zeta)(x)=0.$\\

We have the following remarks regarding the  estimate  (\ref{coe}).
\begin{remark}\label{rem-obs}
\begin{enumerate}
  \item For $f=b_\zeta=0,$ the inequality (\ref{coe}) was established
for $ \mathcal{T}$ large enough provided there is a subset $O$ of the support of $h$ satisfying the following so-called
geometrical control condition (GCC): "there exists $x_0 \in  \mathbf{R}^n$ such that $O$ is a
neighborhood of the closure of the set $\Gamma (x_0) :=\{x\in \partial\Omega
/\; (x - x_0)\nu(x)>0\}$", where $\nu(x)$ denotes the unit outward
normal at $x\in \partial\Omega$ (see \cite{bar}).
In particular, for $n=1$ and  $g={\bf 1}_\omega$ the estimate  (\ref{coe})  holds   for  $\omega = (a,b)\subset \Omega=(0,1)$ and $\mathcal{T}>2\inf(a,1-b)$ (see \cite{zua2}).
 \item Using  robustness results on the observability property (see \cite{ouz14}), we can see that  (\ref{coe}) also holds under the  geometrical control condition for small Lipschitz constant $L_{\zeta}$ of the operator:  $y \in L^2(\Omega) \mapsto f(y)+b_{\zeta}(x)  y$. Indeed, let the above (GCC) hold, so that:
\begin{equation}\label{coe0}
\int_0^{\mathcal{T}} \int_\Omega h(x) |\phi_t(x)|^2 dx  dt \ge \delta
\|(\phi_1,\phi_2)\|^2_{H},\; \; \forall
(\varphi_1,\varphi_2)\in H,
\end{equation}
(for some $\mathcal{T}, \delta>0$), where $\phi$ is the solution of
$ \phi_{tt}=\Delta \phi, \phi(0)=\phi_1\in
H_0^1(\Omega), \phi_t(0)=\phi_2\in L^2(\Omega).$
We can easily show that the solution $T(t)y_0=(\varphi(t),\varphi_t(t)),\;  y_0=(\varphi_1,\varphi_2)$ of (\ref{dua}) verifies
$\|T(t)y_0\|\le e^{L_{\zeta} t} \|y_0\|,\; \forall t\ge 0.$ Then using the variation of constants formula, we get:
$$
|\langle CS(t)y_0,S(t)y_0\rangle|\le \|h\|_{L^\infty(\Omega)} \mathcal{T}  L_{\zeta} (1+e^{L_{\zeta} \mathcal{T}})  e^{L_{\zeta} \mathcal{T}}\|y_0\|^2  + |\langle CT(t)y_0,T(t)y_0\rangle|
$$
where $C= \left(
                     \begin{array}{cc}
                       0 & 0 \\
                       0 & h(x) I \\
                     \end{array}
                   \right). $
From this and (\ref{coe0}) it comes:
$$
\int_0^{\mathcal{T}}|\langle CT(t)y_0,T(t)y_0\rangle| dt \ge (\delta-\beta)\|y_0\|^2 \; \;\mbox{with}\; \;\beta=\|h\|_\infty \mathcal{T}^2 e^{L_{\zeta}\mathcal{T}} L_{\zeta} (1+e^{L_{\zeta} \mathcal{T}}).$$
Hence the estimate (\ref{coe}) holds whenever $L_{\zeta}<\gamma^{-1}(\delta), $ where $\gamma^{-1}$ is the inverse function of $\gamma: s\mapsto \|h\|_\infty \mathcal{T}^2 s e^{s\mathcal{T}}  (1+e^{s \mathcal{T}} ).$

 \item An other situation in which (\ref{coe}) holds  is the case of functions: $f(y )(x)=k(y(x)), $ where $k: R\to R$ is   such that $k(0)=0$ and  $s k (s)\le - s^2\|b_\zeta\|_{L^\infty(\Omega)},\; \forall s\in R $ (see \cite{teb}).
\end{enumerate}
\end{remark}

The following result concerns the  approximate controllability  toward  target states of the form $(\theta_1, 0)$.

\begin{theorem}\label{thm2-i}
Let  $ \theta_1\in    H_0^1(\Omega)\cap H^2(\Omega) $ and let assumptions $(\mathcal{P}_1)-(\mathcal{P}_4)$ hold for $\zeta=\theta_1$. Then for every  initial state $(w_0,w_1)\in H$ and for every $\epsilon > 0,$  there are a time $T =T(w_1,w_2,\theta_1,\epsilon)>0$ and  a piecewise static  control
$v(\cdot,t)=v_T(\cdot)\in L^\infty(\Omega)$  such that the respective solution  to (\ref{wave-init}) satisfies:
\begin{equation}\label{appro-i}
\|w(T)-\theta_1\|_{H_0^1(\Omega)}+\|w_t(T)\|_{L^2(\Omega)} <\epsilon\cdot
\end{equation}
\end{theorem}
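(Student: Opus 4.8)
The plan is to steer the system with the single time-independent control $v(\cdot,t)=v_T(\cdot):=b_{\theta_1}(\cdot)$ and to show that, with this choice, the target $(\theta_1,0)$ is an (exponentially, or at least asymptotically) stable equilibrium of the resulting autonomous damped semilinear wave equation; the conclusion then follows by running the flow long enough. First I would verify that $(\theta_1,0)$ is indeed an equilibrium. By the definition of $b_{\theta_1}$ one has $b_{\theta_1}\theta_1=-(\Delta\theta_1+f(\theta_1)){\bf 1}_{\Lambda(\theta_1)}$, so on $\{\theta_1\neq 0\}$ the steady equation $\Delta\theta_1+b_{\theta_1}\theta_1+f(\theta_1)=0$ holds by construction, while on $\{\theta_1=0\}$ one uses $(\mathcal{P}_4)$ together with the fact that $\Delta\theta_1=0$ a.e. on the level set $\{\theta_1=0\}$ for $\theta_1\in H^2(\Omega)$ (applying twice the vanishing of first-order weak derivatives on level sets). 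Assumption $(\mathcal{P}_1)$ guarantees $v_T\in L^\infty(\Omega)$ and well-posedness of the corresponding Cauchy problem.

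Next, setting $u=w-\theta_1$, I would write the equation satisfied by $u$, namely $u_{tt}=\Delta u+b_{\theta_1}u-h u_t+g(u)$ with $g(u):=f(u+\theta_1)-f(\theta_1)$ and $g(0)=0$, and introduce the natural energy $E(t)=\frac12\|u_t\|_{L^2(\Omega)}^2+\frac12\|\nabla u\|_{L^2(\Omega)}^2-\frac12\int_\Omega b_{\theta_1}u^2\,dx-\int_\Omega\Big(\int_0^{u(x)}g(s)\,ds\Big)dx$. Multiplying the equation by $u_t$ and integrating by parts gives the dissipation identity $\dot E(t)=-\int_\Omega h\,|u_t|^2\,dx\le 0$, and assumption $(\mathcal{P}_3)$ is exactly what I would use to show that $E$ is coercive and bounded below, so that the trajectory stays bounded in $H$ and cannot escape.

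The decisive step is to upgrade this monotone energy decay to genuine convergence $\|(w(t)-\theta_1,\,w_t(t))\|_H\to 0$. Here I would invoke the observability inequality $(\mathcal{P}_2)$, which bounds the full energy of the conservative flow over a window $[0,\mathcal{T}]$ by the dissipated quantity $\int_0^{\mathcal{T}}\!\int_\Omega h|\varphi_t|^2$. Combined with the dissipation identity this should yield a decay estimate of the form $E(t+\mathcal{T})\le(1-c)E(t)$ for some $c\in(0,1)$, hence exponential decay of $E$, and thus an explicit time $T=T(w_1,w_2,\theta_1,\epsilon)$ for which $\|w(T)-\theta_1\|_{H_0^1(\Omega)}+\|w_t(T)\|_{L^2(\Omega)}<\epsilon$, as claimed.

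I expect the main obstacle to be precisely this last step in the presence of the nonlinearity. The observability in $(\mathcal{P}_2)$ is postulated for the $f$-flow (\ref{dua}), whereas the shifted dynamics carries the modified nonlinearity $g$; reconciling the two — either through the robustness of the observability estimate under globally Lipschitz perturbations (as exploited in Remark \ref{rem-obs}) or through a compactness/LaSalle argument showing that the only element of the $\omega$-limit set compatible with $h u_t\equiv 0$ is the equilibrium — and absorbing the nonlinear term into the energy inequality via Gronwall, is where the real work lies. The coercivity supplied by $(\mathcal{P}_3)$ is what rules out the energy stalling at a strictly positive limit.
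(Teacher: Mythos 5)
Your strategy coincides with the paper's: apply the single static control $v_T=b_{\theta_1}$, shift by the target via $z=w-\theta_1$ (using $(\mathcal{P}_4)$ together with the vanishing of $\Delta\theta_1$ a.e.\ on $\{\theta_1=0\}$ to check that $(\theta_1,0)$ is a steady state of the closed-loop system), and then convert the observability hypothesis $(\mathcal{P}_2)$ plus the damping into exponential decay of $(z,z_t)$ in $H$, which yields the required finite $T$. The paper does not carry out your energy/multiplier computation explicitly; instead it verifies that $f_{\theta_1}(z):=f(z+\theta_1)-f(\theta_1)$ is globally Lipschitz with $f_{\theta_1}(0)=0$ and satisfies the dissipativity condition $\langle f_{\theta_1}(z)+b_{\theta_1}z,z\rangle\le 0$ coming from $(\mathcal{P}_3)$, and then invokes Tebou's observability-implies-exponential-stabilization theorem \cite{teb} to get $\|(z(t),z_t(t))\|\le Me^{-\sigma t}\|(z(0),z_t(0))\|$.

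The one step you explicitly leave open --- reconciling the observability inequality (\ref{coe}), which is postulated for the flow (\ref{dua}) with nonlinearity $f$, with the shifted dynamics carrying $f_{\theta_1}$ --- is a genuine gap in your write-up, and neither of the two remedies you propose (Lipschitz robustness of observability, or a LaSalle/compactness argument) is what the paper uses; both would be considerably heavier than necessary. The paper's fix is a one-line translation: since $\theta_1$ is an equilibrium of $\varphi_{tt}=\Delta\varphi+b_{\theta_1}\varphi+f(\varphi)$, the map $\vartheta\mapsto\varphi:=\vartheta+\theta_1$ sends solutions of the shifted equation $\vartheta_{tt}=\Delta\vartheta+b_{\theta_1}\vartheta+f_{\theta_1}(\vartheta)$ to solutions of (\ref{dua}), and $\varphi_t\equiv\vartheta_t$, so the observed quantity $\int_0^{\mathcal{T}}\!\int_\Omega h|\varphi_t|^2$ is literally unchanged and $(\mathcal{P}_2)$ transfers verbatim to the shifted system. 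With that observation inserted, and either the citation of \cite{teb} or a fully worked-out absorption of the nonlinear terms in your inequality $E(t+\mathcal{T})\le(1-c)E(t)$, your argument closes and is the paper's proof.
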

\begin{proof}
Let   $\epsilon>0 $ be fixed.  Let us consider the control:
\begin{equation}\label{1v(x,t)}
v(x,t)=v_T(x)=-\frac{\Delta \theta_1+f(\theta_1)}{ \theta_1 } {\bf 1}_{\Lambda(\theta_1)},
\end{equation}
and let  us set  $z=w-\theta_1$. Thus the system (\ref{wave-init}) takes the form:
 \begin{equation}\label{1wave0}
\left\{%
\begin{array}{llll}
  z_{tt}&=&\Delta z + v(x,t)(z+\theta_1) +\Delta \theta_1- h(x) z_t+
  f(z+\theta_1),
& \hbox{in}\:\;\;\;\;
\Omega\times (0,T)\\
z&=&0, \;  & \hbox{on} \, \ \partial\Omega \times (0,T)\\
z(0)&=&z_1,\; z_t(0)=z_2, & \hbox{in}\,\ \Omega \\
\end{array}%
\right.
\end{equation}
where $z_1=w_1-\theta_1$ and $z_2=w_2.$\\
We have: $\Delta\theta_1  -(\Delta \theta_1/\theta_1) {\bf 1}_{\Lambda(\theta_1)} \theta_1=0$ (see  \cite{att,ouz14}). Then, using this and  the fact that for almost every $x$ in $\Omega;\ : \theta_1(x)=0 \Rightarrow f(\theta_1)(x)=0,$ the system (\ref{1wave0}) (controlled with (\ref{1v(x,t)})) becomes:
 \begin{equation}\label{1wave-aux}
\left\{%
\begin{array}{llll}
z_{tt}&=&\Delta z +b_{\theta_1}(x)z- h(x) z_t+f_{\theta_1}(z),
& \hbox{in}\;\;\;\;
\Omega\times (0,T)\\
z&=&0, \;  & \hbox{on} \ \partial\Omega \times (0,T)\\
z(0)&=&z_1,\; z_t(0)=z_2, & \hbox{in}\,\ \Omega \\
\end{array}%
\right.
\end{equation}
where $f_{\theta_1}(z)=f(z+{\theta_1})-f({\theta_1}),\; z\in L^2(\Omega)$.\\
Let $\vartheta$ be a solution of the system:
\begin{equation}\label{1duatheta}
\vartheta_{tt}=\Delta \vartheta+b_{\theta_1}(x)\vartheta+f_{\theta_1}(\vartheta),\;\; \vartheta(0)=\vartheta_1\in
H_0^1(\Omega),\;\; \vartheta_t(0)=\vartheta_2\in L^2(\Omega).
\end{equation}
Then, remarking that   $\varphi:=\vartheta+\theta_1$ satisfies the equation: $\varphi_{tt}=\Delta \varphi+b_{\theta_1}(x)\varphi+f(\varphi),$ we deduce by taking $\varphi(0)=\vartheta_1$ and $\varphi_t(0)=\vartheta_2$ in (\ref{dua}) that:
\begin{equation}\label{1coetheta}
\int_0^{\mathcal{T}} \int_\Omega h(x) \vartheta_t(x)^2 dx  dt \ge \delta
\|(\vartheta_0,\vartheta_1)\|^2_{H},\; \; \forall
(\vartheta_1,\vartheta_2)\in H.
\end{equation}
Moreover, since $f_{\theta_1}$ is Lipschitz and satisfies  $f_{\theta_1}(0)=0$ and $\langle f_{\theta_1}(z)+b_{\theta_1}(x)z,z\rangle\le 0, $ for all $ z\in L^2(\Omega), $ we deduce that the solution of (\ref{1wave-aux}) can be defined for all $t\ge 0$  and satisfies the following exponential decay (see \cite{teb}):
$$\|(z(t),z_t(t))\| \le M e^{-\sigma t} \|(z(0),z_t(0))\|,\; \forall t\ge 0,
$$
for some constants  $M, \sigma>0$ which are independent of $T$.\\
We deduce that   for  $T>\frac{1}{\sigma} | \ln(\frac{M\|(w_1-\theta_1,w_2)\|}{\epsilon}) |$, the solution of
(\ref{wave-init}) satisfies the following estimate:
\begin{equation}\label{1estim-thetea-0}
\|w(T)-\theta_1\|_{H_0^1(\Omega)}+\|w_t(T)\|_{L^2(\Omega)} <\epsilon.
\end{equation}
%\rule{2mm}{2mm}
\end{proof}
Our  main result in this section concerns the case of a full target state $\theta=(\theta_1,\theta_2)$ and is  stated as follows:
\begin{theorem}\label{thm2}
 Let  $ (\theta_1,\theta_2)\in   H_0^{1}(\Omega) \cap H^2(\Omega) \times L^2(\Omega) $ be such that:  $ d(x):=\frac{\theta_2}{\theta_1}{\bf 1}_{\Lambda(\theta_1)}\in L^{\infty}(\Omega)$ and that for a.e. $x\in \Omega,$ we have $\theta_1(x)=0 \Rightarrow \theta_2(x)=0.$  We further assume that  assumptions $(\mathcal{P}_1)-(\mathcal{P}_4)$ hold for $\zeta=\theta_1$.
Then for every initial state $(w_1,w_2)\in H$ and for every $\epsilon > 0,$ there are a time $T =T(w_1,w_2,\theta_1,\theta_2,\epsilon)>0$ and  a piecewise static  control
$v(\cdot,t)=v_T(\cdot)\in L^\infty(\Omega)$  such that the respective solution  to (\ref{wave-init}) satisfies:
\begin{equation}\label{1appro}
\|w(T)-\theta_1\|_{H_0^1(\Omega)}+\|w_t(T)-\theta_2\|_{L^2(\Omega)} <\epsilon\cdot
\end{equation}
\end{theorem}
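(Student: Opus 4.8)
The plan is to synthesize the steering control out of two consecutive static phases, so that the resulting $v(\cdot,t)$ is piecewise constant in time. First I would use Theorem \ref{thm2-i} to drive the system approximately onto the equilibrium-like state $(\theta_1,0)$; then I would invoke the fast velocity-correction mechanism of Theorem \ref{thm1c} to push the velocity component from (approximately) $0$ up to (approximately) $\theta_2$ while leaving the position component essentially unchanged. Since Theorem \ref{thm1c} preserves the position coordinate of its initial datum, applying it with initial state $(\theta_1,0)$ and prescribed terminal velocity $\theta_2$ produces exactly the function $a(x)=\frac{\theta_2}{\theta_1}{\bf 1}_{\Lambda(\theta_1)}=d(x)$ together with the compatibility requirement $\theta_1(x)=0\Rightarrow\theta_2(x)=0$; both are available by hypothesis, so the theorem applies in this second phase. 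I would also note that throughout the second phase equation (\ref{wave-init}) fits the framework of Theorem \ref{thm1c} by absorbing the damping and the nonlinearity into a single globally Lipschitz term $\hat f(t,w,w_t)=f(w)-h(x)w_t$.

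Concretely, I would design the second phase first. Given $\epsilon>0$, Theorem \ref{thm1c} (applied to the \emph{exact} state $(\theta_1,0)$ with target velocity $\theta_2$) furnishes a short time $T_2\in(0,1)$ and a static control $d(\cdot)/T_2$ for which the solution $\bar y$ issued from $(\theta_1,0)$ satisfies $\|\bar y(T_2)-(\theta_1,\theta_2)\|<\epsilon/2$. The decisive point is a continuous-dependence estimate whose constant does not degenerate as $T_2\to0^+$. If $y$ denotes the solution of the same second-phase problem issued from a nearby state $(\theta_1,0)+\eta$, then subtracting the two variation-of-constants formulas and using that $S(t)$ is a contraction semigroup, that the multiplicative term contributes the factor $\|d\|_{L^\infty(\Omega)}/T_2$, and that $\hat f$ is $L$-Lipschitz, I obtain
$$
\|y(t)-\bar y(t)\|\le\|\eta\|+\int_0^t\Big(\tfrac{\|d\|_{L^\infty(\Omega)}}{T_2}+L\Big)\|y(s)-\bar y(s)\|\,ds .
$$
Gronwall's lemma (Lemma \ref{Gronw}) then yields $\|y(T_2)-\bar y(T_2)\|\le\|\eta\|\,e^{\|d\|_{L^\infty(\Omega)}+LT_2}\le C\|\eta\|$, with $C=C(\|d\|_{L^\infty(\Omega)},L)$ \emph{independent of} $T_2$, because the large amplitude $\|d\|_{L^\infty(\Omega)}/T_2$ is compensated by the short duration $T_2$ in the exponent.

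With this constant $C$ fixed, I would then design the first phase. By Theorem \ref{thm2-i} there are a time $T_1$ and the static control $b_{\theta_1}(\cdot)$ of (\ref{1v(x,t)}) such that the solution issued from $(w_1,w_2)$ reaches a state $(\theta_1,0)+\eta$ with $\|\eta\|<\epsilon/(2C)$. Concatenating the two controls — $b_{\theta_1}$ on $[0,T_1]$ followed by $d/T_2$ on $(T_1,T_1+T_2]$ — yields a piecewise static control in $L^\infty(\Omega)$. Running the second phase from $(\theta_1,0)+\eta$ and invoking the continuous-dependence estimate gives $\|y(T_1+T_2)-\bar y(T_2)\|\le C\|\eta\|<\epsilon/2$, so the triangle inequality produces $\|y(T_1+T_2)-(\theta_1,\theta_2)\|<\epsilon$, which is (\ref{1appro}) with $T=T_1+T_2$. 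The one genuinely delicate step is the uniformity of $C$ in $T_2$: were the continuous-dependence constant to blow up as $T_2\to0^+$, one could not first freeze $T_2$ (hence $C$) and only afterwards demand that the first-phase error $\|\eta\|$ be small relative to $1/C$; everything else is a routine assembly of the two preceding theorems.
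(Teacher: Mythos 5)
Your proposal is correct and follows essentially the same route as the paper: a first static phase via Theorem \ref{thm2-i} to reach $(\theta_1,0)$ approximately, a second short static phase obtained by applying Theorem \ref{thm1c} to the auxiliary system started exactly at $(\theta_1,0)$ with $a=d$, and a Gronwall continuous-dependence estimate whose exponent $\bigl(L+\|d\|_{L^\infty(\Omega)}/T_2\bigr)T_2\le L+\|d\|_{L^\infty(\Omega)}$ stays bounded as $T_2\to 0^+$. You make the uniformity of that constant (and the resulting order of quantifiers) more explicit than the paper does, which is a welcome clarification rather than a deviation.
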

\begin{proof}
From Theorem \ref{thm2-i}, we deduce that for any  $\epsilon>0 $ there is a time $T_1=T_1(w_1,w_2,\theta_1)$ such that the  control: $v(x,t)=v_1(x)=-\frac{\Delta \theta_1+f(\theta_1)}{ \theta_1 } {\bf 1}_{\Lambda(\theta_1)},\;t\in (0,T_1)$ guarantees the following estimate for  the corresponding solution of (\ref{wave-init}):
\begin{equation}\label{estim-thetea-0}
\|w(T_1)-\theta_1\|_{H_0^1(\Omega)}+\|w_t(T_1)\|_{L^2(\Omega)} <\epsilon.
\end{equation}
We will continue to control our initial system (\ref{wave-init}) on $(T_1,T)$ until the achievement of the full target state $\theta=(\theta_1,\theta_2)$, where $T>T_1$ is to be determined. Consider the following system:
\begin{equation}\label{waveS2}
\left\{%
\begin{array}{llll}
  w_{tt}&=&\Delta w + v(x,t) w - h(x) w_t+f(w),
& \hbox{in}\;\;\;\;
\Omega\times (T_1,T)\\
w&=&0, \;  & \hbox{on} \ \partial\Omega\times (T_1,T)\\
w(x,T_1)&=&w(T_1^-),\,  w_{t}(x,T_1)=w_t(T^-_1), & \hbox{in}\,\ \Omega \\
\end{array}%
\right.
\end{equation}
We will use Theorem \ref{thm1c} to reach $(w(T_1^-),\theta_2)$ at a time $T>T_1$ which is close to $T_1.$ For this end, let us observe that by virtue of (\ref{estim-thetea-0}) the system (\ref{waveS2}) can be  approximated by the following one:
\begin{equation}\label{waveS2-theta}
\left\{%
\begin{array}{llll}
  \tilde{w}_{tt}&=&\Delta \tilde{w} + v(x,t)  \tilde{w} - h(x)\tilde{w}_t+f(\tilde{w}),
& \hbox{in}\;\;\;\:
\Omega\times (T_1,T)\\
\tilde{w}&=&0, \;  & \hbox{on} \ \partial\Omega\times (T_1,T)\\
\tilde{w}(x,T_1)&=&\theta_1,\,  \tilde{w}_{t}(x,T_1)=0, & \hbox{in}\,\ \Omega \\
\end{array}%
\right.
\end{equation}
Applying Theorem \ref{thm1c} to system (\ref{waveS2-theta}), we deduce the existence of a static control $v(x,t)=v_2(x)\in L^\infty(\Omega)$  such that the corresponding state $(\tilde{w}(T),\tilde{w}_t(T))$  is close to
$(\theta_1,\theta_2)$ at some $T=T(\theta_1,T_1)$ which is sufficiently close to $T_1^+$.\\
Using the same control $v(x,t)=v_2(x)$ for (\ref{waveS2}), we can see by  Gronwall's inequality and the variation of constants formula that:
$$
\|w(t)-\tilde{w}(t)\| \le \|w(T_1^-)-\theta_1\| e^{C (t-T_1)},\; T_1\le t\le T,
$$
where $ C=L+\|v_2\|_{L^\infty(\Omega)} $ and $L$ is a Lipschitz constant of the function $F: H \rightarrow H;\; y=(w_1,w_2) \mapsto (0,f(w_1)-h w_2)$. Thus
\begin{equation}\label{w-tilde}
\|w(t)-\tilde{w}(t)\|<e^C \epsilon,\; \forall t\in (T_1,T),
\end{equation}
whenever $0<T-T_1<1.$ We deduce that (\ref{1appro}) holds.\\
We conclude that  the initial system (\ref{wave-init}) can be approximately steered  to  $(\theta_1,\theta_2)$
 at $T$ by using the control:
$$
v(x,t)=\left\{
         \begin{array}{ll}
          v_1(x), & \:t\in (0,T_1) \\
          \\
           v_2(x), & t\in (T_1,T)
           \end{array}
       \right.
$$
This  completes the proof.
%\,\rule{2mm}{2mm}
\end{proof}

\begin{remark}\label{rem2}
According to the proof of Theorem \ref{thm1c}, we can see that  in the case: $\frac{\theta_2}{\theta_1}{\bf 1}_{\Lambda(\theta_1)}\in W^{2,\infty}(\Omega)$, one can  take  the control $v_2(x)=\frac{\theta_2}{(T-T_1)\theta_1}{\bf 1}_{\Lambda(\theta_1)}$  in the time-interval $(T_1,T).$
\end{remark}

\section{Exact controllability }
In this section, we  study the set of target states  that can be exactly achieved at a finite time by the system (\ref{wave-init}) for $n=1$.
The idea in this part consists first, thanks to the continuity of  the Sobolev   embedding $H^1(\Omega) \hookrightarrow
\mathcal{C}^0(\overline{\Omega})$  for $n=1$, in applying  the results of
Section 2 in order to make the state closer to the desired one at a
time $T_1$ with respect to $L^\infty-$norm. Then one can exploit  the results of the exact additive controllability of semilinear wave equation to construct a time $T$ and  a control $v(x,t)$ on $(T_1,T)$ that guarantee the exact
steering of the target state at $T$.\\
In this section, we take $n=1$ and $\Omega=(0,l), \; l>0$.

\subsection{Damped case }
\subsubsection{The case of homogeneous boundary conditions}
In this part, we will study the exact controllability of the one dimensional version of the equation (\ref{wave-init}) evolving in a  time-interval $(0,T)$.\\
For any  $\zeta\in H_0^1(\Omega)\cap H^2(\Omega)$ and $0<t_0<T$, we  consider the following  system:
\begin{equation}\label{wave-semil*}
    \left\{%
\begin{array}{llll}
    \psi_{tt}&=& \Delta \psi+b_\zeta \psi -h(x) \psi_t +f_{\zeta}(\psi) +{\bf 1}_{O} u(x,t), & \hbox{in}\;\;\:\: \Omega\times (t_0,T) \\
    \psi(0,t)&=&\psi(l,t)=0, & \hbox{in}\,   (t_0,T)\\
    \psi(.,t_0)&=&\psi_1,\, \psi_{t}(.,t_0)=\psi_2, & \hbox{in}\, \Omega \\
\end{array}%
\right.
\end{equation}
where $O$ is a sub-domain of $\Omega$,
% whose  characteristic function is denoted by ${\bf 1}_{O}.$
and let us consider the following property:

$(\mathcal{P}_5) : $   For every $t_0>0$, the system (\ref{wave-semil*}) is exactly null controllable at some time  $T>t_0$  with a control $u(x,t)$  satisfying
  \begin{equation}\label{add-contrl}
   \big ( \int_{t_0}^{T}\|u(\cdot,t)\|_{L^2(O)}^{2}\,dt \big )^{\frac{1}{2}} \leq c_{_{T-t_0}}   \|\psi_1\|_{{ H_0^1(\Omega)}},
\end{equation}
where $c_{_{T-t_0}}>0$ is a  constant depending on $T-t_0$.\\
We refer the reader to \cite{fu,leiv,mari,zha,zua2,wu1,wu2} for  some results on the exact controllability problem for equations like (\ref{wave-semil*}).\\
We are ready to state our first main result of this section.
\begin{theorem}\label{thm3}
Let $n=1, \;(w_1,w_2)\in H_0^{1}(\Omega)  \times L^2(\Omega)-\{(0,0)\}$ and let  $ \theta_1\in  H_0^1(\Omega)\cap H^2(\Omega) $ be such that: $\theta_1 \ne 0, $ a.e in $\overline{O}$ for some open subset $O$ of $\Omega.$ Assume that  assumptions $(\mathcal{P}_1)-(\mathcal{P}_5)$ hold  for  $\zeta=\theta_1$.\\
Then  there exist  $T=T(w_1,w_2,\theta_1)>0$ and a control
$v(\cdot,\cdot)\in L^2(0,T;L^2(\Omega))$   such that the respective solution to the system (\ref{wave-init}) satisfies
 $w(T)=\theta_1$ and $w_t(T)= 0.$

\end{theorem}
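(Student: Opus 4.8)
The plan is to combine the approximate multiplicative controllability of Section 2 with the additive exact controllability hypothesis $(\mathcal{P}_5)$, via a two‑phase strategy: first drive the state close to $(\theta_1,0)$ in the $L^\infty$‑sense, then correct the residual exactly by \emph{simulating} the additive control of (\ref{wave-semil*}) through a multiplicative one.

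\textbf{Phase 1 ($L^\infty$-approximation).} Fix a small $\epsilon>0$ to be adjusted at the end. Since $n=1$ the embedding $H^1(\Omega)\hookrightarrow\mathcal{C}^0(\overline{\Omega})$ holds, so $\theta_1$ is continuous; as it does not vanish on the compact set $\overline{O}$, there is $m>0$ with $|\theta_1|\ge m$ on $\overline{O}$. I would first apply Theorem \ref{thm2-i} with $\zeta=\theta_1$ to obtain a time $T_1>0$ and a piecewise static control on $(0,T_1)$ for which the solution of (\ref{wave-init}) satisfies $\|w(T_1)-\theta_1\|_{H_0^1(\Omega)}+\|w_t(T_1)\|_{L^2(\Omega)}<\epsilon$. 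The one–dimensional embedding upgrades this to $\|w(T_1)-\theta_1\|_{L^\infty(\Omega)}\le C\epsilon$, so for $\epsilon$ small enough $w(T_1)$ is bounded away from $0$ on $\overline{O}$.

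\textbf{Phase 2 (exact correction by conversion).} On $(T_1,T)$ set $z=w-\theta_1$. I solve the \emph{additive} auxiliary problem (\ref{wave-semil*}) with $\zeta=\theta_1$, initial data $\psi_1=w(T_1)-\theta_1$, $\psi_2=w_t(T_1)$: by $(\mathcal{P}_5)$ there are a time $T>T_1$ and a forcing $u\in L^2(T_1,T;L^2(O))$ steering it to $z(T)=0,\ z_t(T)=0$, with the cost bound (\ref{add-contrl}); since the coefficients of (\ref{wave-semil*}) are time–independent, the controllability time $T-T_1=:\tau$ and the constant $c_\tau$ may be taken independent of $T_1$. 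I then \emph{define} $w:=z+\theta_1$ and read off the multiplicative control
\[
v(x,t)=b_{\theta_1}(x)+\frac{{\bf 1}_{O}(x)\,u(x,t)}{w(x,t)},\qquad t\in(T_1,T).
\]
Using $\Delta\theta_1+b_{\theta_1}\theta_1=-f(\theta_1)$ a.e. (valid thanks to $(\mathcal{P}_4)$ and to $\Delta\theta_1=0$ a.e. on $\{\theta_1=0\}$, exactly as in the proof of Theorem \ref{thm2-i}), a direct substitution shows that with this $v$ the function $w$ solves (\ref{wave-init}) on $(T_1,T)$ while $z$ solves precisely (\ref{wave-semil*}). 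Hence $w(T)=\theta_1$, $w_t(T)=0$, and concatenating the two phases yields a control $v\in L^2(0,T;L^2(\Omega))$ with $T=T(w_1,w_2,\theta_1)$, \emph{provided the displayed formula is meaningful}.

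\textbf{Main obstacle.} The delicate point is that the definition of $v$ requires $w=\theta_1+z$ to stay bounded away from $0$ on $\overline{O}$ for \emph{all} $t\in[T_1,T]$, not merely at $t=T_1$. I would control the growth of $z$ over the whole correction interval by the variation of constants formula and Gronwall's inequality for (\ref{wave-semil*}) (with $f_{\theta_1}$ globally Lipschitz and $f_{\theta_1}(0)=0$), obtaining
\[
\sup_{t\in[T_1,T]}\|(z(t),z_t(t))\|_{H}\le C\big(\|(\psi_1,\psi_2)\|_{H}+\|u\|_{L^2(T_1,T;L^2(O))}\big),
\]
and then invoking the cost bound (\ref{add-contrl}) together with $\|(\psi_1,\psi_2)\|_{H}<\epsilon$ to majorize the right–hand side by $C'(1+c_\tau)\epsilon$. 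The embedding gives $\sup_t\|z(t)\|_{L^\infty(\Omega)}\le C''\epsilon$, so choosing $\epsilon<m/(2C'')$ forces $|w|\ge m/2$ on $\overline{O}$ throughout $[T_1,T]$. Consequently ${\bf 1}_{O}u/w\in L^2(T_1,T;L^2(\Omega))$ and $v$ is well defined, which closes the argument. I expect this uniform lower bound on $w$ over the entire correction interval to be the crux, since it is exactly what legitimizes the additive–to–multiplicative conversion.
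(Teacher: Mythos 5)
Your proposal follows essentially the same route as the paper: steer approximately to $(\theta_1,0)$ via Theorem \ref{thm2-i}, invoke $(\mathcal{P}_5)$ for the auxiliary additive system, convert the additive control to a multiplicative one by dividing by $\psi+\theta_1$, and justify the division by combining the Gronwall estimate, the cost bound (\ref{add-contrl}) and the one-dimensional embedding $H_0^1(\Omega)\hookrightarrow L^\infty(\Omega)$ to get $|\psi+\theta_1|\ge \mu/2$ on $O$ throughout $[T_1,T]$. The only point you pass over is that ``reading off'' $v$ and verifying by substitution only produces \emph{a} solution of (\ref{wave-init}) reaching the target; since the resulting potential $v_1=u/(\psi+\theta_1)$ is merely $L^2$ in space--time, the paper additionally proves (via H\"older's inequality, the embedding $H_0^1\hookrightarrow L^\infty$, and a Gronwall-type argument) that the controlled multiplicative system has a unique solution, so that \emph{the} respective solution indeed satisfies $w(T)=\theta_1$, $w_t(T)=0$.
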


\begin{proof}
Let us set  $z=w-\theta_1$ in  the system (\ref{wave-init}). We have:
\begin{equation}\label{Ekwave0}
 z_{tt}=\Delta z + v(x,t)(z+\theta_1) - h(x) z_t+f(z+\theta_1)+\Delta \theta_1,\; \forall t\in (0,T),\; a.e.\;  x\in \Omega.
 \end{equation}
Then for any fixed $0<\epsilon<1,$  it comes from Theorem \ref{thm2-i} that there is a time $T_1>0$ (large enough) such that the  control defined by $v(x,t)=b_{\theta_1}=-\frac{\Delta \theta_1+f(\theta_1)}{ \theta_1} {\bf 1}_{\Lambda(\theta_1)}$ guarantees the following estimate:
\begin{equation}\label{Ekine2}
\|(z(T_1),z_t(T_1))\|_H< \epsilon.
\end{equation}
Let  $T>T_1$, and let us consider the following  system
\begin{equation}\label{Ekwavelin}
    \left\{%
\begin{array}{llll}
    \psi_{tt} &=& \Delta \psi +b_{\theta_1} \psi-h(x) \psi_t+f_{\theta_1}(\psi) +{\bf 1}_{O} u(x,t), & \hbox{in}\;\;\:\: \Omega\times (T_1,T) \\
    \psi(0,t)&=&\psi(l,t)=0, & \hbox{in}\,   (T_1,T)\\
    \psi(.,T_1)&=&z(T_1^-),\, \psi_{t}(.,T_1)=z_t(T_1^-), & \hbox{in}\, \Omega \\
\end{array}%
\right.
\end{equation}
where $u(x,t)$ is an additive control. \\
By assumption,  there exists $T>T_1$ and $u(\cdot)\in L^2(T_1,T;L^2(\Omega))$ satisfying (\ref{add-contrl}) and  such that the respective solution to  system (\ref{Ekwavelin}) satisfies: $(\psi(T),\psi_t(T))=(0,0)$. Then, in order to construct a control that steers   (\ref{wave-init}) to $(\theta_1,0)$, it suffices to look for a control $v(x,t)=v_1(x,t)+b_{\theta_1}(x)$ on $(T_1,T)$ such that:
$$v_1(x,t)  (\psi(x,t)+\theta_1(x))={\bf 1}_{O}  u(x,t).$$
For this purpose, we will show that $\psi(x,t)+\theta_1(x)\neq0, $ a.e.
$x\in O\times (T_1,T),$ and then take for $t\in (T_1,T)$:
$$
v_1(x,t)=\left\{
  \begin{array}{ll}
    \displaystyle\frac{u(x,t)}{ \psi(x,t)+\theta_1(x)}, & \; \mbox{a.e.} \; \;\;\;\;\; x\in O \\
    \\
    0, & \; \mbox{a.e.} \;  x\in \Omega \setminus {O}
  \end{array}
\right.
$$
The solution of (\ref{Ekwavelin}) satisfies the following integral formula:
\begin{equation}\label{Ekcontformula}
\begin{array}{ccc}
(\psi,\psi_{t})(t)&=&S(t-T_1)(z(T_1),z_{t}(T_1))+\int_{T_1}^{t}S(t-\tau) \big ( 0, - h(x)\psi_t(x,\tau)+\\
&&b_{\theta_1} \psi(\tau)+f_{\theta_1}(\psi)(\tau) +{\bf 1}_{O} u(x,\tau) \big ) d\tau,\,\ \forall t\in
[T_1,T].
\end{array}\end{equation}
Since $f_{\theta_1}$ is Lipschitz and vanishes at $0$,  we deduce from the
 formula (\ref{Ekcontformula}) and by using  (\ref{add-contrl}) and (\ref{Ekine2})    that:
$$\|(\psi,\psi_{t})(t)\|_{H}
\leq C  \epsilon+C \int_{T_1}^{T}
\|(\psi,\psi_{t})(\tau)\|_{H} d\tau, \;\forall t\in [T_1,T]\; (\mbox{for some} \;C>0),$$
which by using the Gronwall's inequality gives:
$$\|(\psi,\psi_{t})(t)\|_{H}\leq
C\epsilon, \;\forall t\in [T_1,T], \; (C>0) $$
and so
$$\|\psi(t)\|_{H_0^1(\Omega)}\leq C  \epsilon, \;\forall t\in [T_1,T].$$
This together with  the continuity of the embedding for $n=1$  $H_0^1(\Omega) \hookrightarrow L^\infty(\Omega)$ (see e.g. \cite{adam}) gives:
\begin{equation}\label{Ekequ1}
    \|\psi(t)\|_{L^{\infty}(\Omega)}\leq C  \epsilon,\; (C>0).
\end{equation}
Moreover, since $\theta_1 \ne 0, $ a.e in $\overline{O},$ we deduce from the fact that  the  embedding $H^{1}(O)\hookrightarrow \mathcal{C}^0(\overline{O})$  is continuous  (recall that  $n=1$)  that  $|\theta_1|\geq \mu>0,$ a.e. in $O$. \\
Then, taking $0<\epsilon<\displaystyle\frac{\mu}{2C}$ in (\ref{Ekequ1}),
we deduce  that for all $ t\in (T_1,T) $ we have
\begin{equation}\label{Ekmu}
|\psi(t)+\theta_1| \geq |\theta_1|-|\psi(t)| \ge
\frac{\mu}{2},\;\mbox{a.e. in} \; O\cdot
\end{equation}
Then, one can choose the control $v_1$  as follows:
\begin{equation}\label{Ekcontt}
    v_1(x,t)=\frac{u(x,t)}{\psi(x,t)+\theta_1(x)} {\bf 1}_{O\times (T_1,T)}.
\end{equation}
From (\ref{Ekmu}) and the fact that $u\in L^2(T_1,T;L^2(\Omega))$, it comes that $v_1\in L^2(T_1,T;L^2(\Omega))$.\\
With this  control,   the   system (\ref{Ekwave0})  becomes:
\begin{equation}\label{Ekwaveexact}
    \left\{%
\begin{array}{llll}
    z_{tt} &=& A z+b_{\theta_1}  z - h(x)z_t+f_{\theta_1}(z) +  (z+\theta_1) \frac{u(\cdot,t)}{\psi(\cdot,t)+\theta_1} {\bf 1}_{O}, & \hbox{in}\;\:\:\: \Omega\times (T_1,T) \\
    z(0,t)&=&z(l,t)=0, & \hbox{in}\,   (T_1,T)\\
    z(.,T)&=&z(T_1^-),\, z_{t}(.,T_1)=z_t(T_1^-), & \hbox{in}\, \Omega \\
\end{array}%
\right.
\end{equation}
It is obvious  that $\psi$ is a solution of (\ref{Ekwaveexact}). Let us show that this is the unique one.\\
 Let $z\in H_0^1(\Omega)$ be a solution of (\ref{Ekwaveexact}). The H$\ddot{o}$lder's inequality leads to:
$$
\int_{T_1}^{T}  \|u(s) (z(s)-\psi(s))\|_{L^2(\Omega)} ds \le \|u(\cdot,t)\|_{L^2(T_1,T;L^2(\Omega))} \, \|z(s)-\psi(s)\|_{L^2(T_1,{T};L^\infty(\Omega))} ds$$
$$
\hspace{4.5cm}\le  C \|u(\cdot,t)\|_{L^2(T_1,T;L^2(\Omega))} \, \|z(s)-\psi(s)\|_{L^2(T_1,{T};H_0^1(\Omega))} ds
$$
for some constant $C>0.$\\
This together with (\ref{Ekmu}) and  the variation of constants formula enables us to establish the following inequality:\\

$
\|(z,z_t)(t)-(\psi,\psi_t)(t)\|\le
$
$$
 \big ( C_1 + C_2 \|u(\cdot,t)\|_{L^2(T_1,T;L^2(\Omega))} \big )  \big ( \int_{T_1}^{T} \|(z,z_s)(s)-(\psi,\psi_s)(s)\|^2 ds \big )^{\frac{1}{2}},
$$
for some constants $ C_1, C_2>0.$ As a consequence we have $z(t)-\psi(t)=0$ for all $t\in [T_1,T].$ Then solution of the system (\ref{Ekwave0}) is such that $z(T)=0$ and $z_{t}(T)=0$ and hence $w(T)=\theta_1$ and $w_{t}(T)=0$. \\
We conclude that the control defined by:
$$
v(\cdot,t)=\left\{
         \begin{array}{ll}
           -\frac{\Delta \theta_1+f(\theta_1)}{ \theta_1}{\bf 1}_{\Lambda(\theta_1)}, & \:t\in (0,T_1) \\
  \frac{u(\cdot,t)}{\psi(\cdot,t)+\theta_1} {\bf 1}_{O}-\frac{\Delta \theta_1+f(\theta_1)}{ \theta_1 } {\bf 1}_{\Lambda(\theta_1)}, & t\in (T_1,T)
\end{array}
       \right.
$$
steers the system (\ref{wave-init}) from the initial state
$(w_{_1},w_{_2}) $ to the desired one $(\theta_1,0)$  at $T.$
%\rule{2mm}{2mm}\\
\end{proof}

\subsubsection{The case of nonhomogeneous boundary conditions}
Here, we intend to study the possibility of achieving a full state $\theta=(\theta_1,\theta_2)$ for the following one dimensional system with nonhomogeneous Dirichlet boundary conditions:
\begin{equation}\label{NHwave-init}
\left\{%
\begin{array}{llll}
  w_{tt}&=&\Delta w + v(x,t)  w -h(x) w_t + f(w),
& \hbox{in}\:\;\;\;
 \Omega \times (0,T)\\
w(0,t)&=&\sigma_1, \ w(l,t)=\sigma_2,   & \hbox{in} \  (0,T)\\
w(x,0)&=&w_{_1},\,  w_{t}(x,0)=w_{_2}, & \hbox{in}\,\ \Omega
\end{array}%
\right.
\end{equation}
with the same assumptions as in (\ref{wave-init}), and $\sigma_1, \sigma_2\in \mathbf{R}.$
% and $\sigma \in H^1(\partial\Omega).$
%{\bf We refer the reader to \cite{guo,las,lions-mag} for some properties  related to the well-posedness question of such systems.}

For any  $\zeta\in  H^2(\Omega)$ satisfying the  compatibility condition $\zeta|_{\partial \Omega}=\sigma:=(\sigma_1,\sigma_2)$, we  consider the following  system  with additive globally distributed control:
\begin{equation}\label{NHwave-semil*}
    \left\{%
\begin{array}{llll}
    \psi_{tt}&=& \Delta \psi+b_\zeta \psi -h(x) \psi_t +f_{\zeta}(\psi) + u(x,t), & \hbox{in}\;\;\:\: \Omega\times (t_0,T) \\
    \psi(0,t)&=& \psi(l,t)=0, & \hbox{in}\, (t_0,T)\\
    \psi(.,t_0)&=&\psi_1,\, \psi_{t}(.,t_0)=\psi_2, & \hbox{in}\, \Omega \\
\end{array}%
\right.
\end{equation}
where $0<t_0<T$.\\
In the sequel,  we will  consider the case of exact steering  of (\ref{NHwave-semil*})  from an initial state $\psi_0=(\psi_1,\psi_2)$ to a target state $\psi_d=(\psi_1^d,\psi_2^d)$   under a control $u\in L^2(t_0,T;L^2(\Omega))$ that satisfies the following  bound inequality with respect to initial and target states:
\begin{equation}\label{NHadd-contrl}
   \big ( \int_{t_0}^{T}\|u(\cdot,t)\|_{L^2(\Omega)}^{2}\,dt \big )^{\frac{1}{2}} \leq c_{_{T-t_0}}  C\big ( \|\psi_0\|_{{ H}},\|\psi_d\|_{{ H}} \big),
\end{equation}
where $c_{_{T-t_0}}>0$ is a bounded function of $T-t_0$  and  $ C\big ( \|\psi_0\|_{{ H}},\|\psi_d\|_{{ H}}\big )>0$ is a function of $\|\psi_0\|_{{ H}}$ and $\|\psi_d\|_{{ H}}$.
In the next theorem, we will consider the case where $t_0$ and  $T$ are close to each other, which  may be linked  to the question of exact controllability in short time (see \cite{curt,kom,mart}).
Note that, since the control acts in all of $\Omega$, the exact controllability of (\ref{NHwave-semil*}) holds in any time $T > 0$.
This may be deduced from  the case of the linear version of (\ref{NHwave-semil*}) (i.e. $f_{\zeta}=0$)  and the fact that, in the case of globally distributed control, the nonlinearity can be suppressed in a trivial way.

We will again proceed as in the case of homogeneous boundary conditions, but here we need to use an auxiliary   $\zeta\in H^2(\Omega)$ such that $w-\zeta$ is the solution of a system like (\ref{NHwave-semil*}) with the condition that $ \zeta\ne 0$ a.e. on $\overline{\Omega}.$ This is why we deal with nonhomogeneous boundary conditions. Moreover, unlike the case of homogeneous BC, here the estimate (\ref{NHadd-contrl}) involves the term $ \|\psi_d\|_{{ H}}$, so we require more than the null exact controllability of the auxiliary system (\ref{NHwave-semil*}). \\
 For any $\zeta\in H_\sigma^2(\Omega):= \{\zeta \in H^2(\Omega):\; \zeta|_{\partial\Omega}= \sigma\}$, we consider the following assumption:

$(\mathcal{P}_6):  \;$ The system (\ref{NHwave-semil*}) is exactly controllable   at any $T>t_0>0$  large enough,   with a control   satisfying (\ref{NHadd-contrl}).\\
Let us also introduce the affine space $H_\sigma^{1}(\Omega):=\{\zeta \in H^1(\Omega):\; \zeta|_{\partial\Omega}= \sigma\}$.

We have:
\begin{theorem}\label{NHthm3}
Let $n=1, \;(w_1,w_2)\in H_\sigma^{1}(\Omega)  \times L^2(\Omega)$ and let  $ (\theta_1,\theta_2)\in H_\sigma^{1}(\Omega)  \times L^2(\Omega). $ If assumptions $(\mathcal{P}_1)-(\mathcal{P}_4)$ and $(\mathcal{P}_6)$ hold for some $\zeta\in H_\sigma^2(\Omega)$ such that $ \zeta \ne 0, $ a.e in $\overline{\Omega}=[0,l], $   then  there exist a time $T=T(w_1,w_2,\zeta)>0$ and a control
$v(\cdot,\cdot)\in L^2(0,T;L^2(\Omega))$   such that the corresponding solution of the system (\ref{NHwave-init}) satisfies
 $(w(T),w_t(T))= (\theta_1,\theta_2).$
\end{theorem}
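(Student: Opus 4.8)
The plan is to mimic the two-stage argument of Theorem \ref{thm3}, adapting it to the nonhomogeneous setting by working with the shifted variable $z := w - \zeta$. First I would reduce the nonhomogeneous problem (\ref{NHwave-init}) to a homogeneous one for $z$, exactly as in the damped homogeneous case: setting $v(x,t) = b_\zeta(x)$ on an initial interval $(0,T_1)$ cancels the term $\Delta\zeta + f(\zeta)$ (using $(\mathcal{P}_4)$-type vanishing of $f(\zeta)$ where $\zeta=0$, though here $\zeta\ne 0$ a.e.\ so the indicator is trivial), so that $z$ solves a homogeneous damped equation of the form (\ref{1wave-aux}). By $(\mathcal{P}_2)$ and $(\mathcal{P}_3)$ this system is exponentially stable, so for $T_1$ large enough I can drive $(z(T_1),z_t(T_1))$ to be $\epsilon$-small in $H$; equivalently $(w(T_1),w_t(T_1))$ is $\epsilon$-close to $(\zeta|_\Omega,0)$ in $H$.

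\textbf{The exact-steering stage.} On $(T_1,T)$ I would again split the multiplicative control as $v(x,t) = v_1(x,t) + b_\zeta(x)$, so that $z = w-\zeta$ satisfies the auxiliary equation (\ref{NHwave-semil*}) with additive forcing $u(x,t) = v_1(x,t)\,(z(x,t)+\zeta(x))$. The crucial difference from Theorem \ref{thm3} is that I now need the \emph{full} exact controllability provided by $(\mathcal{P}_6)$, steering $\psi$ from the near-zero state $(z(T_1^-),z_t(T_1^-))$ to the genuinely nonzero target $(\theta_1-\zeta,\,\theta_2)$, rather than merely to $0$. This is exactly why $(\mathcal{P}_6)$ furnishes a control bounded by $C(\|\psi_0\|_H,\|\psi_d\|_H)$ in the estimate (\ref{NHadd-contrl}): the target term $\|\psi_d\|_H$ no longer vanishes. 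Given such $u$ and the associated solution $\psi$ of (\ref{NHwave-semil*}), I would define
$$
v_1(x,t) = \frac{u(x,t)}{\psi(x,t)+\zeta(x)},\qquad t\in(T_1,T).
$$

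\textbf{The key nondegeneracy step.} The heart of the argument is showing $\psi(x,t)+\zeta(x)\ne 0$ a.e.\ on $\Omega\times(T_1,T)$, so that $v_1$ is well defined and lies in $L^2(T_1,T;L^2(\Omega))$. Here the hypothesis $\zeta\ne 0$ a.e.\ on all of $\overline\Omega=[0,l]$, together with continuity of the embedding $H^1(\Omega)\hookrightarrow \mathcal{C}^0(\overline\Omega)$ for $n=1$, gives a uniform lower bound $|\zeta|\ge\mu>0$ on the whole interval. Meanwhile a Gronwall estimate on (\ref{NHwave-semil*}), using that $f_\zeta$ is Lipschitz with $f_\zeta(0)=0$, the bound (\ref{NHadd-contrl}), and the smallness (\ref{Ekine2})-type bound on the initial data, controls $\|\psi(t)\|_{H^1_0(\Omega)}$ and hence $\|\psi(t)\|_{L^\infty(\Omega)}$ on $[T_1,T]$. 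The subtlety is that, unlike Theorem \ref{thm3}, $\psi(T)=\psi_d\ne 0$, so $\psi$ is \emph{not} uniformly small on the whole interval; the $L^\infty$ bound will instead depend on $\|\psi_d\|_H$ and on $T-T_1$. I therefore expect the main obstacle to be choosing the parameters in the right order: I would first fix $T-T_1$ small enough (using $(\mathcal{P}_6)$ in the short-time regime, and that $c_{T-t_0}$ is a bounded function of $T-t_0$) so that the solution cannot drift far from $\zeta$, guaranteeing $|\psi+\zeta|\ge\mu/2$, and only then choose $T_1$ large enough to make the residual error beat the prescribed tolerance. Once $v_1\in L^2$ is obtained, the final step—showing that the closed-loop equation for $z$ with control $v_1$ has $\psi$ as its unique solution, hence $z(T)=\theta_1-\zeta$ and $z_t(T)=\theta_2$—follows verbatim from the uniqueness argument of Theorem \ref{thm3}, via Hölder's inequality, the lower bound $|\psi+\zeta|\ge\mu/2$, and Gronwall.
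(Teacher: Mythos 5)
Your proposal follows essentially the same route as the paper's proof: the shift $z=w-\zeta$, the stabilization stage with $v=b_\zeta$ on $(0,T_1)$, the use of $(\mathcal{P}_6)$ to steer the auxiliary additive-control system to the nonzero target $(\theta_1-\zeta,\theta_2)$, the division $v_1=u/(\psi+\zeta)$ justified by $|\zeta|\ge\mu>0$ on $\overline{\Omega}$ via the one-dimensional Sobolev embedding, and the uniqueness argument borrowed from Theorem \ref{thm3}. The subtlety you flag --- that $\psi(T)=\theta_1-\zeta$ is not small, so the uniform $L^\infty$ bound on $\psi$ used for the nondegeneracy of $\psi+\zeta$ is delicate --- is real, and the paper handles it only by taking $T$ close to $T_1$ so that the steering control $u$ is small in $L^2$, which is exactly the parameter ordering you propose.
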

\begin{proof}
%{\bf Firstly, let us point out that system (\ref{NHwave-init}) admits a unique mild solution $w\in H_\sigma^{1}(\Omega)$ (see \cite{,}),} and let us set
%Letting $z=w-\zeta$ in  the system (\ref{NHwave-init}),  we obtain the following one:
Consider the following system:
\begin{equation}\label{NHEkwave0}
    \left\{%
\begin{array}{llll}
 z_{tt}&=&\Delta z + v(x,t)(z+\zeta) - h(x) z_t+f(z+\zeta)+\Delta \zeta, & \hbox{in}\;\;\:\: \Omega\times (0,T)\\
   z(0,t)&=&z(l,t)= 0, & \, \mbox{in} \; (0,T)\\
    z(\cdot,0)&=&(w_1-\zeta,w_2), & \, \mbox{in} \; \Omega
\end{array}%
\right.
\end{equation}
For any fixed $0<\epsilon<1,$  it comes from the proof of Theorem \ref{thm2-i} that there is a time $T_1>0$ (large enough) such that the  control defined by $v(x,t)=b_\zeta=-\frac{\Delta \zeta+f(\zeta)}{ \zeta} {\bf 1}_{\Lambda(\zeta)}$ guarantees the following estimate:
\begin{equation}\label{NHEkine2}
\|(z(T_1),z_t(T_1))\|_H< \epsilon.
\end{equation}
Let  $T>T_1$,
% be the time of exact steering which is to be constructed later,
 and let us consider the following additive-control system:
\begin{equation}\label{NHEkwavelin}
    \left\{%
\begin{array}{llll}
    \psi_{tt} &=& \Delta \psi +b_\zeta \psi-h(x) \psi_t+f_{\zeta}(\psi) + u(x,t), & \hbox{in}\;\;\:\: \Omega\times (T_1,T) \\
    \psi(0,t)&=& \psi(l,t)=0, & \hbox{in}\,  (T_1,T)\\
    \psi(.,T_1)&=&z(T_1^-),\, \psi_{t}(.,T_1)=z_t(T_1^-), & \hbox{in}\, \Omega \\
\end{array}%
\right.
\end{equation}
where $u(x,t)$ is the additive control.\\
By assumption,  there exists $u(\cdot)\in L^2(T_1,T;L^2(\Omega))$ satisfying (\ref{NHadd-contrl}) and is   such that the respective solution to  system (\ref{NHEkwavelin}) satisfies: $(\psi(T),\psi_t(T))=(\theta_1-\zeta,\theta_2)$. \\
Then, in order to construct a control that steers   (\ref{NHwave-init}) to $(\theta_1,\theta_2)$, it suffices to build a control $v(x,t)=v_1(x,t)+b_\zeta(x)$ on $(T_1,T)$ such that:
$$v_1(x,t)  (\psi(x,t)+\zeta(x))=  u(x,t),$$
which may be done as in the proof of Theorem \ref{thm3} by observing (thanks to  estimate (\ref{NHadd-contrl})) that the additive steering control $u(x,t)$ satisfies:
$$\big ( \int_{T_1}^{T}\|u(\cdot,t)\|_{L^2(O)}^{2}\,dt \big )^{\frac{1}{2}} < \epsilon,$$
whenever $T$ is sufficiently close to $T_1$. \\
Hence the  respective solution to (\ref{NHEkwavelin}) satisfies the estimate:
\begin{equation}\label{NHEkequ1}
    \|\psi(t)\|_{L^{\infty}(\Omega)}\leq C  \epsilon,\; (C>0).
\end{equation}
This enables us to show that $\psi(t)+\zeta\ne 0$, a.e. in $ \Omega,$ so that, one can consider the control $v_1$  defined by:
\begin{equation}\label{NHEkcontt}
    v_1(x,t)=\frac{u(x,t)}{\psi(x,t)+\zeta(x)},
\end{equation}
which renders the   system (\ref{NHEkwave0})  equivalent to the following one:
\begin{equation}\label{NHEkwaveexact}
    \left\{%
\begin{array}{llll}
    z_{tt} &=& A z+b_\zeta  z - h(x)z_t+f_{\zeta}(z) +  (z+\zeta) \frac{u(\cdot,t)}{\psi(\cdot,t)+\zeta}, & \hbox{in}\;\:\:\: \Omega\times (T_1,T) \\
    z(0,t)&=&z(l,t)=0, & \hbox{in}\,  (T_1,T)\\
    z(.,T)&=&z(T_1^-),\, z_{t}(.,T_1)=z_t(T_1^-), & \hbox{in}\, \Omega \\
\end{array}%
\right.
\end{equation}
whose unique solution is the same as the one of (\ref{NHEkwavelin}). In other words, $z=\psi$. Then the solution of the system (\ref{NHEkwave0}) is  such that $z(T)=\theta_1-\zeta$ and $z_{t}(T)=\theta_2. $ \\
Let us now set: $w=z+\zeta.$ Then $w$ is the unique solution of the system (\ref{NHwave-init}) and we have $(w(T),w_{t}(T))=(\theta_1,\theta_2)$.
We conclude that the control defined by:
$$
v(\cdot,t)=\left\{
         \begin{array}{ll}
           -\frac{\Delta \zeta+f(\zeta)}{ \zeta}{\bf 1}_{\Lambda(\zeta)}, & \:t\in (0,T_1) \\
  \frac{u(\cdot,t)}{\psi(\cdot,t)+\zeta} -\frac{\Delta \zeta+f(\zeta)}{ \zeta } {\bf 1}_{\Lambda(\zeta)}, & t\in (T_1,T)
\end{array}
       \right.
$$
guarantees the exact steering of the system (\ref{NHwave-init}) from the initial state
$(w_{_1},w_{_2}) $ to  $(\theta_1,\theta_2)$  at $T.$
%\rule{2mm}{2mm}\\
\end{proof}

\begin{remark}\label{NHrem*}
 If the assumptions of Theorem \ref{NHthm3} hold for $\zeta=\theta_1,$ then the exact controllability required for the system   (\ref{NHwave-semil*}) can be restricted to target states of the form: $\psi_d=(0,\psi_2^d)$.

\end{remark}

\subsection{Undamped case}
In this subsection, we will establish an exact controllability result for an uniform time $T$ when dealing with undamped equation.
 We consider the  following  one dimensional undamped equation:
\begin{equation}\label{wwave-init}
\left\{%
\begin{array}{llll}
  w_{tt}&=&\Delta w + f(w)+ v(x,t)  w,
& \hbox{in} \;\;\;\;\Omega \times (0,T)\\
w(0,t)&=&w(l,t)=0, \;  & \hbox{in} \  (0,T)\\
w(x,0)&=&w_{_1},\,  w_{t}(x,0)=w_{_2}, & \hbox{in}\,\ \Omega \\
\end{array}%
\right.
\end{equation}
where $f$ is globally Lipschitz. In the context of additive controls, Zuazua \cite{zua2} has considered the exact internal controllability of the  one dimensional version of following semilinear system: \begin{equation}\label{SL}
y_{tt}=\Delta y+f(y)+{\bf 1}_O u(x,t),\;\; (O\subset\Omega)
\end{equation}
The  multidimensional case  has been treated in \cite{zha}.\\
For any  $\zeta\in H_0^1(\Omega)\cap H^2(\Omega)$ and $0<t_0<T$, we  consider the following  system:
\begin{equation}\label{wave-semil*und}
    \left\{%
\begin{array}{llll}
    \psi_{tt}&=& \Delta \psi+b_\zeta \psi  +f_{\zeta}(\psi) +{\bf 1}_{O} u(x,t), & \hbox{in}\;\;\:\: \Omega\times (t_0,T) \\
    \psi(0,t)&=&\psi(l,t)=0, & \hbox{on}\,  \partial\Omega\times (t_0,T)\\
    \psi(.,t_0)&=&\psi_1,\, \psi_{t}(.,t_0)=\psi_2, & \hbox{in}\, \Omega \\
\end{array}%
\right.
\end{equation}
 which we will assume to be  exactly null controllable  at  $T>t_0$ for $t_0$ small enough and $T$ large enough (i.e. for $t_0<\alpha< T$ for some $\alpha>0$) with controls $u\in L^2(t_0,T;L^2(O))$ such that:
\begin{equation}\label{add-contrl-und}
   \big ( \int_{t_0}^{T}\|u(\cdot,t)\|_{L^2(O)}^{2}\,dt \big )^{\frac{1}{2}} \leq C    \|(\psi_1,\psi_2)\|_{{ H}},
\end{equation}
for some positive constant $C=C_{T}$  depending only on  $T, $ for  $t_0>0$ small enough.

The next theorem states  our second main result of this section.

\begin{theorem}\label{thm-exact2}
Let $n=1, \; (w_1,w_2)\in H_0^{1}(\Omega)  \times L^2(\Omega)-\{(0,0)\}, $ and  let $\theta_1\in H_0^{1}(\Omega) \cap H^2(\Omega)$ be such that  $\theta_1(x) \ne 0, $ a.e. $x\in\overline{O}$ for some open subset $O$ of $\Omega$ and $b_{\theta_1}\in L^\infty(\Omega)$.

If for some $\alpha>0,$ the target state $\theta:=(\theta_1,0)$ is approximately reachable  at a small  time $0<T_1<\alpha$ with a control $v_{T_1}  \in L^2(0,T_1;L^2(\Omega))$, and if the system (\ref{wave-semil*und}) is exact  null controllable at $T>\alpha$ for $\zeta=\theta_1$  with a control $u$ satisfying  (\ref{add-contrl-und}), then  there exists  a control
$v  \in L^2(0,T;L^2(\Omega))$  such that the corresponding solution of (\ref{wwave-init}) satisfies:
 $w(T)=\theta_1$ and $w_t(T)=0.$
\end{theorem}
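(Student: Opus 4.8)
The plan is to reproduce the two-stage construction of Theorem~\ref{thm3}, adapted to the undamped equation (\ref{wwave-init}), the one essential change being that the first, approximate-reaching stage is now supplied \emph{directly by hypothesis} rather than produced by the dissipation estimate of Theorem~\ref{thm2-i}, which is unavailable here since $h\equiv 0$. I would first set $z=w-\theta_1$, turning (\ref{wwave-init}) into
\[
z_{tt}=\Delta z+v(x,t)(z+\theta_1)+f(z+\theta_1)+\Delta\theta_1,
\]
with $z(0)=w_1-\theta_1,\ z_t(0)=w_2$, and fix a small $\epsilon\in(0,1)$ to be adjusted at the end. On the short interval $(0,T_1)$ with $T_1<\alpha$, I apply the control $v_{T_1}\in L^2(0,T_1;L^2(\Omega))$ furnished by the approximate-reachability assumption; this yields $\|(z(T_1),z_t(T_1))\|_H<\epsilon$.

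On $(T_1,T)$ with $T>\alpha$ I write $v=v_1+b_{\theta_1}$, where $b_{\theta_1}=-\frac{\Delta\theta_1+f(\theta_1)}{\theta_1}\mathbf{1}_{\Lambda(\theta_1)}$ recasts the $z$-equation in the form of the auxiliary system (\ref{wave-semil*und}) with $\zeta=\theta_1$, exactly as in the proof of Theorem~\ref{thm3}, while $v_1$ is reserved to realise the additive null-controlling input as a multiplicative one. By the exact-null-controllability assumption there is a control $u$ driving the solution $\psi$ of (\ref{wave-semil*und}) started at $(z(T_1^-),z_t(T_1^-))$ to rest at $T$, with $u$ obeying (\ref{add-contrl-und}); since the data have size $\epsilon$, this gives $\big(\int_{T_1}^{T}\|u(\cdot,t)\|_{L^2(O)}^2\,dt\big)^{1/2}\le C\epsilon$. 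A variation-of-constants estimate exploiting $f_{\theta_1}(0)=0$, the Lipschitz bound on $f_{\theta_1}$, and Gronwall's inequality then forces $\|\psi(t)\|_{H_0^1(\Omega)}\le C\epsilon$ on $[T_1,T]$, and the one-dimensional embedding $H_0^1(\Omega)\hookrightarrow L^\infty(\Omega)$ upgrades this to $\|\psi(t)\|_{L^\infty(\Omega)}\le C\epsilon$.

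Since $\theta_1\ne 0$ a.e. on $\overline O$ and $H^1(O)\hookrightarrow\mathcal C^0(\overline O)$ for $n=1$, there is $\mu>0$ with $|\theta_1|\ge\mu$ a.e. on $O$; choosing $\epsilon<\mu/(2C)$ yields $|\psi+\theta_1|\ge\mu/2$ a.e. on $O\times(T_1,T)$. Hence
\[
v_1(x,t)=\frac{u(x,t)}{\psi(x,t)+\theta_1(x)}\,\mathbf{1}_{O\times(T_1,T)}
\]
is well defined, and the bound on $u$ together with this lower bound places $v_1\in L^2(T_1,T;L^2(\Omega))$. With this $v$, the function $z$ satisfies exactly the equation for $\psi$ with the same data at $T_1$, because $(z+\theta_1)\frac{u}{\psi+\theta_1}\mathbf{1}_O=u\,\mathbf{1}_O$ once $z=\psi$. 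A uniqueness argument---H\"older's inequality to absorb the $L^2$-in-time coefficient $u/(\psi+\theta_1)$ followed by Gronwall---then shows $z\equiv\psi$ on $[T_1,T]$, whence $z(T)=0,\ z_t(T)=0$, i.e.\ $w(T)=\theta_1,\ w_t(T)=0$. Concatenating $v_{T_1}$ on $(0,T_1)$ with $v_1+b_{\theta_1}$ on $(T_1,T)$ produces the desired control in $L^2(0,T;L^2(\Omega))$.

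I expect the delicate point to be the nondegeneracy $\psi+\theta_1\ne 0$ on $O$, which alone legitimises the division defining $v_1$ and keeps it square-integrable; it hinges on three ingredients acting in concert: the smallness of $u$ guaranteed by (\ref{add-contrl-und}) with a constant $C_T$ depending on $T$ but \emph{not} on the initial state, the $n=1$ embeddings into $L^\infty(\Omega)$ and $\mathcal C^0(\overline O)$, and $\theta_1$ being bounded away from $0$ on $O$. I would finally emphasise that, since $T_1<\alpha$ and $T>\alpha$ are fixed independently of $(w_1,w_2)$ and the constant in (\ref{add-contrl-und}) depends only on $T$, the steering time $T$ is uniform over all initial states---precisely the feature distinguishing this undamped result from Theorem~\ref{thm3}.
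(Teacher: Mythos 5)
Your proposal is correct and follows essentially the same route as the paper: the change of variable $z=w-\theta_1$, the first stage supplied by the approximate-reachability hypothesis, the decomposition $v=v_1+b_{\theta_1}$ reducing to the additive null-control problem for (\ref{wave-semil*und}), the nondegeneracy $|\psi+\theta_1|\ge\mu/2$ on $O$ via the smallness of $u$ from (\ref{add-contrl-und}) and the one-dimensional embedding, the division defining $v_1$, and the Gronwall uniqueness argument identifying $z$ with $\psi$. The paper states the second stage by referring back to the proof of Theorem~\ref{thm3}; you have simply written out those same steps explicitly, including the concluding remark on the uniformity of $T$ over initial states.
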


\begin{proof}
Let $T>\alpha$, and let us set  $z=w-\theta_1$ in  the system (\ref{wwave-init}). Then we have
\begin{equation}\label{wEkwave0}
 z_{tt}=\Delta z + v(x,t)(z+\theta_1) +f(z+\theta_1)+\Delta \theta_1,\; x\in \Omega,\; t\in (0,T).
 \end{equation}
For any fixed $0<\epsilon<1,$ there are  $T_1\in (0,\alpha)$ small enough and  a control $v_{T_1}$ that provide  the following estimate: \begin{equation}\label{wEkine2}
\|(z(T_1),z_t(T_1))\|_H< \epsilon.
\end{equation}
Letting $v(x,t)=v_1(x,t)+b_{\theta_1}$, we deduce that $z$ satisfies  the following homogeneous  equation:
\begin{equation}\label{wEkwave0*}
 z_{tt}=\Delta z + b_{\theta_1} z + v_1(x,t)(z+\theta_1) +f_{\theta_1}(z),\; x\in \Omega,\; t\in (0,T).
 \end{equation}
Let  us now consider the following  system:
\begin{equation}\label{wEkwavelin}
    \left\{%
\begin{array}{llll}
    \psi_{tt}&=& \Delta \psi+b_{\theta_1}\psi +f_{\theta_1}(\psi) +{\bf 1}_{O} u(x,t), & \hbox{in} \;\;\:\: \Omega\times (T_1,T) \\
    \psi(0,t)=\psi(l,t)&=&0, & \hbox{on}\,  \partial\Omega\times (T_1,T)\\
    \psi(.,T_1)&=&z(T_1^-),\, \psi_{t}(.,T_1)=z_t(T_1^-), & \hbox{in}\, \Omega \\
\end{array}%
\right.
\end{equation}
where $u(x, t)$ is an additive control. \\
By assumption, there exists a control $u(\cdot,\cdot)\in L^2(T_1,T;L^2(\Omega))$ such that $\psi(T)=\psi_t(T)=0$ and
\begin{equation}\label{wEkestimcontrl}
    \int_{T_1}^{T}\|u(\cdot,t)\|_{L^2(O)}^{2}\,dt\leq C\|(z(T_1),z_t(T_1))\|^2_{{ H}},
\end{equation}
where the positive constant $C$ can be chosen independent of $T_1$.
 Then, in order to construct a control that steers   (\ref{wwave-init}) to $(\theta_1,0)$ at $T$, it suffices to look for a control $v(x,t)$ on $(T_1,T)$ such that:
$$v(x,t)  (\psi(x,t)+\theta_1(x))={\bf 1}_{O}  u(x,t),\; \; a.e. \; \; x\in \Omega.$$
For the remainder part, it suffices to reproduce the corresponding part in the proof of Theorem \ref{thm3} to deduce that the state $(\theta_1,0)$ can be  exactly achieved using the following control:
$$
v(\cdot,t)=\left\{
         \begin{array}{ll}
           v_{_{T_1}}(\cdot,t), & \:t\in (0,T_1) \\
           \\
         b_{\theta_1}+ \frac{u(\cdot,t)}{\psi(\cdot,t)+\theta_1} {\bf 1}_{O}, & t\in (T_1,T)
\end{array}
       \right.
$$
%\rule{2mm}{2mm}

\end{proof}

\begin{remark}
 The results of Theorems \ref{thm3} $\&$ \ref{thm-exact2} can be extended to several dimension in hight energy spaces (see  \cite{ouz14} for the bilinear case).

 \end{remark}

\subsection{Example}
%\begin{example}
Here, we will present an illustrating example.
% for Theorems \ref{thm2} $\&$ \ref{thm3}.
Let us consider the following semilinear and linear systems respectively with additive control:
\begin{equation}\label{wave-semil**}
    \left\{%
\begin{array}{llll}
    \psi_{tt}&=& \Delta \psi +\mu(x)\psi -h(x) \psi_t+
    k(\psi) + {\bf 1}_{O} u(x,t), & \hbox{in}\, Q_T:=\Omega\times (0,T) \\
    \psi(0,t)&=&\psi(l,t)=0, & \hbox{in}\,  (0,T)\\
    \psi(.,0)&=&\psi_1,\, \psi_{t}(.,0)=\psi_2, & \hbox{in}\, \Omega \\
\end{array}%
\right.
\end{equation}
%\Sigma_T:= \partial\Omega\times (0,T)
and
\begin{equation}\label{wave-lin1}
    \left\{%
\begin{array}{llll}
   \varphi_{tt} &=& \Delta \varphi+\mu(x)\varphi  +{\bf 1}_{O} u_0(x,t), & \hbox{in}\;Q_T \\
    \varphi(0,t)&=&\varphi(l,t)=0, & \hbox{in}\,  (0,T)\\
    \varphi(.,0)&=&\varphi_1,\, \varphi_{t}(.,0)=\varphi_2, & \hbox{in}\,\; \Omega \\
\end{array}%
\right.
\end{equation}
where  $O$ is an open subset of $\Omega,$ the functions $\mu $ and $h$ are such that $ \mu, h\in L^\infty(\Omega), $ the nonlinear term  $k: R \to R$  is Lipschitz,  $u_0(x,t)$ and $u(x,t)$ are the additive controls and belong to $L^2(O\times (0,T))$.

Let us first examine  the exact controllability of (\ref{wave-semil**}). For this end, we start with proving that under the assumption  of  exact controllability of the linear part (\ref{wave-lin1}), the  semilinear system (\ref{wave-semil**}) is exactly controllable  over the same time interval as the linear version (\ref{wave-lin1}).

The following elementary controllability result for the system (\ref{wave-semil**}) is
sufficient for our purpose.
\begin{lem}\label{lem-add}
Assume that:\\
%(i) $\omega\subset O, $\\
(i) supp$(h)\subset O,$ \\
and \\
(ii)  for all $y\in L^2(\Omega),\; $ we have supp$(k\circ y)\subset O$.\\
If the linear system (\ref{wave-lin1}) is null exactly controllable with a control satisfying (\ref{add-contrl}), then so is the semilinear system (\ref{wave-semil**}).
\end{lem}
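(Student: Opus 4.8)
The plan is to exploit the two localization hypotheses to absorb the damping and nonlinear terms directly into the additive control, thereby reducing the exact null controllability of the semilinear system (\ref{wave-semil**}) to that of its linear part (\ref{wave-lin1}). The key observation is that, by (i), the term $h\psi_t$ is supported in $O$ for every $\psi_t\in L^2(\Omega)$, and by (ii) the same is true of $k(\psi)$; since the additive control acts only through ${\bf 1}_{O}$, both of these terms may be transferred to the control side without violating the support constraint. Thus controlling the semilinear equation should cost nothing more than controlling the linear one.

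First I would fix an initial state $(\psi_1,\psi_2)\in H$ (target $(0,0)$) and invoke the assumed null controllability of (\ref{wave-lin1}): there exist a time $T$ and a control $u_0\in L^2(0,T;L^2(O))$ satisfying (\ref{add-contrl}) such that the solution $\varphi$ of (\ref{wave-lin1}) with $(\varphi_1,\varphi_2)=(\psi_1,\psi_2)$ reaches $(\varphi(T),\varphi_t(T))=(0,0)$. I would then set $\psi:=\varphi$ and define
\begin{equation}\label{lem-u}
u:=u_0+h\,\varphi_t-k(\varphi).
\end{equation}
By (i) and (ii) the last two summands vanish a.e.\ outside $O$, so that ${\bf 1}_{O}u=u$ and $u\in L^2(0,T;L^2(O))$ (indeed $h\in L^\infty(\Omega)$ and $\varphi_t\in L^2(0,T;L^2(\Omega))$, while the Lipschitz bound on $k$ together with $|O|<\infty$ gives $k(\varphi)\in L^2(0,T;L^2(O))$). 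Substituting (\ref{lem-u}) into (\ref{wave-semil**}) and using that $\varphi$ solves (\ref{wave-lin1}) shows term by term that $\psi=\varphi$ solves (\ref{wave-semil**}) with control $u$; by the standard well-posedness of (\ref{wave-semil**}) for globally Lipschitz $k$, this $\psi$ is \emph{the} solution, and it satisfies $\psi(T)=\psi_t(T)=0$. Hence (\ref{wave-semil**}) is null exactly controllable on $(0,T)$.

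It then remains to verify that $u$ obeys a control estimate of the type (\ref{add-contrl}). For this I would apply the variation of constants formula to $\varphi$ together with Gronwall's inequality (Lemma \ref{Gronw}) to obtain $\sup_{t\in[0,T]}\|(\varphi(t),\varphi_t(t))\|_H\le C\big(\|(\psi_1,\psi_2)\|_H+\|u_0\|_{L^2(0,T;L^2(O))}\big)$, whence $\|h\varphi_t\|_{L^2(0,T;L^2(O))}$ and $\|k(\varphi)\|_{L^2(0,T;L^2(O))}$ are each controlled by a constant times this quantity. Combining this with the linear bound (\ref{add-contrl}) for $u_0$ and the triangle inequality applied to (\ref{lem-u}) yields the desired estimate for $u$.

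The substitution itself is entirely elementary, so the existence of the steering control follows at once from the absorption argument. The only genuinely delicate point I anticipate is matching the precise right-hand side of (\ref{add-contrl}): the energy estimate above naturally produces the full norm $\|(\psi_1,\psi_2)\|_H$, so one must either settle for a bound in terms of $\|(\psi_1,\psi_2)\|_H$ or exploit the specific structure of (\ref{wave-lin1}) to re-express the velocity contribution through $\|\psi_1\|_{H_0^1(\Omega)}$. I expect this norm-matching (together with confirming that the homogeneous form survives, which is why the convention $k(0)=0$ is convenient) to be the main technical obstacle, whereas the controllability assertion is immediate.
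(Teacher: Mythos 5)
Your proof is correct and follows essentially the same route as the paper: absorb $h\varphi_t$ and $k(\varphi)$ into the control via the support hypotheses, set $u=u_0+h\varphi_t-k(\varphi)$, and identify $\psi=\varphi$ by uniqueness. Your added discussion of verifying the bound (\ref{add-contrl}) via Gronwall is in fact more careful than the paper, which simply asserts that $u$ satisfies the estimate; the norm-matching issue you flag (the energy estimate naturally yields $\|(\psi_1,\psi_2)\|_H$ rather than only $\|\psi_1\|_{H_0^1(\Omega)}$) is a real subtlety the paper glosses over.
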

\begin{proof}
Let us consider the system (\ref{wave-semil**}) and the following one:
\begin{equation}\label{wave-lin}
    \left\{%
\begin{array}{llll}
   \varphi_{tt} &=& \Delta \varphi+\mu(x)\varphi  +{\bf 1}_{O} u_0(x,t), & \hbox{in}\; Q_T \\
    \varphi(0,t)&=&\varphi(l,t)=0, & \hbox{in}\, (0,T)\\
    \varphi(.,0)&=&\psi_1,\, \varphi_{t}(.,0)=\psi_2, & \hbox{in}\,\; \Omega \\
\end{array}%
\right.
\end{equation}
For any couple of control $u_0(x,t)$ and corresponding solution $\varphi$ of (\ref{wave-lin}), we consider the control:
$u(x,t)=u_0(x,t)+h(x)\varphi_t(x,t)-k(\varphi(x,t))$.\\
From the assumptions on $u_0(x,t), \;h$ and $k$ we can see that $u(x,t)$  satisfies (\ref{add-contrl}) and ${\bf 1}_O u(x,t)={\bf 1}_O u_0(x,t)+h(x)\varphi_t(x,t)-k(\varphi(x,t)).$ \\
Then with the control  $u(x,t)$, the system (\ref{wave-semil**}) takes the form:
$$
    \left\{%
\begin{array}{llll}
    \psi_{tt}&=& \Delta \psi+\mu(x)\psi -h(x) (\psi_t-\varphi_t) + \big ( k(\psi)-k(\varphi) \big ) +{\bf 1}_{O} u_0(x,t), & \hbox{in}\; Q_T\\
    \psi(0,t)&=&\psi(l,t)=0, & \hbox{in}\,  (0,T)\\
    \psi(.,0)&=&\psi_1,\, \psi_{t}(.,0)=\psi_2, & \hbox{in}\,\; \Omega \\
\end{array}%
\right.
$$
which admits $\varphi$ as a particular solution, and  by uniqueness it comes $\psi=\varphi$. Hence the null exact controllability of the semilinear system (\ref{wave-semil**}) follows from the one of its linear part (\ref{wave-lin}).
\end{proof}
Let us now describe our illustrative example. Consider the system (\ref{wave-init}) with  $h={\bf 1}_O $ for some proper open subset $ O=(a,b)$ of $ \Omega=(0,3)$ such that $[1,2]\subset O$ and let $f$ be such that $f(y)(x)=k(y(x)), $ where $k(s)=-s    \alpha(s)\;$ and $\alpha(s)= - c\: (s-a)^2(s-b)^2{\bf 1}_{O},\:  c>0.$ Here, the function $k : \mathbf{R}\to \mathbf{R}$ is $\mathcal{C}^1$ and supp$(k)\subset O$, so $k$ is  Lipschitz.\\
Now in order to define our target state, consider the function defined by: $$\xi_1(x)=\left\{
  \begin{array}{ll}
  x  , & \; x\in [0,1] \\
      \big ( 1-(x-1)(x-2) \big  )e^{(x-1)^2(x-2)^2}, & \; x\in [1,2] \\
   (  -x+3), & \; x\in [2,3]
  \end{array}
\right.
$$
and for each $\eta>0$ we set $\xi_\eta=\eta\xi_1$, and consider the target state $(\theta_1,\theta_2)=(\xi_\eta,-\Delta \xi_\eta).$ We have $(\theta_1,\theta_2)\in \big ( H_0^1(0,3)\cap H^2(0,3) \big ) \times  L^2(0,3)$ and $ \frac{\Delta \theta_1}{\theta_1}\in L^\infty(0,3)$.
Moreover for $ \eta>0$ small enough i.e. $\eta< \frac{a}{\|\xi_1\|_{L^\infty(\Omega)}}, $ we have $k(\xi_\eta(x))=0,\; \forall x\in\Omega $ so that $b_{\theta_1}(x)=-\frac{\Delta \theta_1}{\theta_1}=\frac{ \theta_2}{\theta_1}.$\\
Let us  show that (\ref{coe}) holds. For this end, let us write: $
b_{\theta_1}(x)y(x)+f(y)(x)=p(x)y(x)+ q(y(x)) $ with $p(x)=b_{\theta_1}(x)-\|b_{\theta_1}\|_{L^\infty(\Omega)} {\bf 1}_{(1,2)} \le 0$ and $q(s)=k(s)+ s \|b_{\theta_1}\|_{L^\infty(\Omega)} {\bf 1}_{(1,2)}.$ \\
Observing that $sq(s)=sk(s)\le0$ for all $ s\in O\setminus(1,2)$, we can see that  $ s q(s)  \le 0,\; \forall s\in \mathbf{R}, $ whenever  $c> \|b_{\xi_1}\|_{L^\infty(\Omega)} \; \displaystyle\max_{s\in (1,2)}  \frac{1}{(s-a)^2(s-b)^2}.$\\
From  \cite{fu,teb}, we deduce that the estimate (\ref{coe}) holds. Moreover,   we have $\langle b_{\theta_1}(x) y+f(y),y\rangle\le 0,\; y\in L^2(\Omega).$ Then according to Theorem \ref{thm2}, one can approximately achieve $(\theta_1,\theta_2)$ (for $\eta< \frac{a}{\|\xi_1\|_{L^\infty(\Omega)}}$)
  using  the control:
$$
v(x,t)=\left\{
         \begin{array}{ll}
          v_1(x)= -\frac{\Delta \xi_1}{ \xi_1 }{\bf 1}_{O}, & \:t\in (0,T_1) \\
          v_2(x), & t\in (T_1,T_1')
\end{array}
       \right.
$$
for $T_1$ large enough ($T_1\to+\infty$),  $T_1'$ sufficiently close to $T_1$ and for $a, b$ and $c$ satisfying the above mentioned conditions. \\
Here, the control $v_2\in W^{2,\infty}(\Omega)$ is  an approximation of $-\frac{\Delta \xi_1}{(T_1'-T_1)\xi_1}{\bf 1}_{O}$ in $L^2(\Omega)$.

Let us  establish the null exact controllability of the  additive-control system (\ref{Ekwavelin}).
From the definition of $\theta_1, h$ and $f$, it is clear that the assumptions of Lemma \ref{lem-add} are satisfied for $\mu=b_{\theta_1}.$   Moreover, we know that (see \cite{mart,zha,zua2}) there exist a $T>T'_1$ and a control $u_0\in L^2(T'_1,T;L^2(O))$ satisfying (\ref{add-contrl}) for $t_0=T_1'$ that steers the linear system:
\begin{equation}\label{lin-ex}
 \left\{%
\begin{array}{llll}
    \varphi_{tt} &=& \Delta \varphi +b_{\theta_1} \varphi+{\bf 1}_{O} u_0(x,t), & \hbox{in}\;\;\:\: \Omega\times (T'_1,T) \\
    \varphi(0,t)&=&\varphi(3,t)=0, & \hbox{in}\,   (T'_1,T)\\
    \varphi(.,T_1')&=&w({T'_1}^-)-\theta_1,\, \varphi_{t}(.,T'_1)=w_t({T'_1}^-), & \hbox{in}\, \Omega \\
\end{array}%
\right.
\end{equation}
to $(0,0)$ at $T$. Then it follows from Lemma \ref{lem-add} that the control $u(x,t)=u_0(x,t)+h(x)\varphi_t-f_{\theta_1}(\varphi)$ guarantees the null exact steering of system (\ref{Ekwavelin}) to $(0,0)$ and satisfies (\ref{add-contrl}). Then applying  Theorem  \ref{thm3}, we deduce that the control:
$$
v(x,t)=\left\{
         \begin{array}{ll}
           -\frac{\Delta \xi_1}{ \xi_1 }{\bf 1}_{O}, & \:t\in (0,T_1) \\
           v_2(x), & \:t\in (T_1,T'_1) \\
          \big ( \frac{u(x,t)}{\psi(x,t)+\theta_1(x)} -\frac{\Delta \xi_1}{ \xi_1 } \big ) {\bf 1}_{O}, & t\in (T_1',T)
\end{array}
       \right.
$$
guarantees the exact steering of system (\ref{wave-init}) to  $(\theta_1,0)$, where $\psi$ is the solution of system (\ref{Ekwavelin}) corresponding to the control $u(x,t)$.
%\end{example}

%For acknowledgements section, please don't number the section, please begin it with \section*{Acknowledgements}
%\section*{Acknowledgments} We would like to thank you for \textbf{following
%the instructions above} very closely in advance. It will definitely
%save us lot of time and expedite the process of your paper's
%publication.

% You may incorporate your references as follows in your main tex file.
% Using BibTex is not recommended but can be handled.

\end{document}